\newtheorem{theorem}{Theorem}[]
\newtheorem{lemma}{Lemma}[]
\newtheorem{statement}{Statement}[]
\theoremstyle{definition}
\newtheorem{defi}{Definition}[]
\newtheorem{remark}[]{Remark}[]
\newtheorem{example}[]{Example}[]
\DeclareMathOperator{\vol}{vol}
\newcommand{\R}{\mathrm{R}}
\newcommand{\Q}{\mathrm{Q}}
\newcommand{\E}{\mathrm{E}}
\title{Angle measures, general rotations, and roulettes in normed planes }
\author[Vitor Balestro]{Vitor Balestro}
\address {CEFET/RJ Campus Nova Friburgo\\
Nova Friburgo\\
BRAZIL}
\email{vitorbalestro@mat.uff.br}
\author[\'A. G.Horv\'ath]{\'Akos G.Horv\'ath}
\address {Department of Geometry \\
Budapest University of Technology and Economics\\
H-1521 Budapest\\
Hungary}
\email{ghorvath@math.bme.hu}
\author[H. Martini]{Horst Martini}
\address {Faculty of Mathematics \\ Chemnitz University of Technology\\ 09107 Chemnitz, Germany}
\email{horst.martini@mathematik.tu-chemnitz.de}
\date{Jan, 2016}
\begin{document}

\begin{abstract}
In this paper  a special group of bijective maps of a normed plane, called the group of general rotations, is introduced; it contains the isometry group as a subgroup. The concept of general rotations leads to the notion of flexible motions of the plane, and to the concept of Minkowskian roulettes. As a nice consequence of this new approach to motions the validity of a strong analogue to the Euler-Savary equations for Minkowskian roulettes is proved.
\end{abstract}

\maketitle

\emph{Keywords:} angle measure; Busemann curvature; Euler-Savary equations; Finsler space; normed plane; roulettes

\emph{2010 MSC:} 46B20, 51M05, 52A21, 53A17

\section{Introduction}

    In the spirit of Minkowski geometry (see \cite{thompson}), our considerations refer to \emph{normed} (or \emph{Minkowski}) \emph{planes}, i.e., to two-dimensional real Banach spaces, whose \emph{unit ball} $B$ is a two-dimensional compact, convex set centered at the origin $O$ of a cartesian coordinate system (which itself presents the Euclidean background metric). The \emph{unit circle} of a normed plane is the boundary $\partial B$ of $B$.
We write capital letters like $A,B,\dots$ for \emph{points} with respective \emph{position vectors} ${\bf a},{\bf b}, \dots$; by $a,b,\dots, g(A,B)$ we denote \emph{lines}, in the latter case spanned by $A$ and $B$, and by $AB$ the \emph{segment} with endpoints $A$ and $B$ is meant. We use $\overrightarrow{AB}$ for the \emph{vector} from $A$ to $B$, or for the \emph{half-line} starting at $A$ and passing through $B$; sometimes we use also $a,b,\dots r_1,r_2$ for half-lines (the respective meaning will be clear by the context). Further on, we write $\|{\bf a}\|, \|{\bf a}\|_E$ for the \emph{general Minkowskian} and the \emph{Euclidean norm} of ${\bf a}$, respectively, and ${\bf a}^o$ stands for the \emph{Minkowskian unit vector} parallel to ${\bf a}; [{\bf a},{\bf b}]$ is the \emph{semi-inner product} corresponding to the Minkowskian norm $\| \cdot \|$. For \emph{identity} and \emph{interior} we use id and int, respectively. Referring to the Minkowskian arc-length $s$, we denote by $r(s)$ the \emph{radial function} of the Minkowskian unit circle, and by $\gamma(s)$ a \emph{planar curve}, both parametrized by $s$; $\chi_\gamma(s)$ is the \emph{Busemann curvature function} of $\gamma(s)$. The \emph{Busemann sigma function} of the $r$-dimensional affine subspace $V_r$ is $\sigma(V_r)$, and $(a,b) \angle$ denotes the \emph{angle} determined by the lines $a,b$. Finally, we say that a vector $x$ is \emph{Birkhoff orthogonal} to a vector $y$ if $||x+ty|| \geq ||x||$ for every $t \in \mathbb{R}$.

\section{Angle measure and rotations in Minkowski planes}

The first part of the following subsection contains also some history of the subject studied here.

\subsection{Angle measures}

The question how to measure angles is old and interesting. Inspired by  Hilbert's axiomatic approach to geometry (see \cite{Hil}), many authors delt with this problem in a large variety of interesting situations. E.g., for continuously differentiable curves satisfying a general extremal property this was discussed by Bliss \cite{bliss}. He defined his concept as follows: If $OA'$ and $OA$ are two extremal rays through the point $O$, and
$A'A$ is the arc of length $l$ of a transversal (which is an arc of a generalized circle passing through $A$ and $A'$, with center $O$) at the generalized distance $r$ from
$O$, then the \emph{generalized angle} between $OA'$ and $OA$ is defined to be the limit
of the ratio $l/r$ as $r$ approaches zero. His analytical formulas reflect the usual computation methods in classical Euclidean and non-Euclidean geometries, and even on surfaces embedded into the Euclidean 3-space.

Busemann \cite{busemann 1} investigated the geometry of Finsler spaces, and there he observed the following facts:
\emph{The volume problem makes it more than probable that an analogous
situation\footnote{as the Riemannian one} exists for Finsler spaces. Therefore the study of Minkowskian
geometry ought to be the first and main step, the passage from there to
general Finsler spaces will be the second and simpler step.
What has been done in Minkowskian geometry, what are the difficulties
and problems, and which tools will be necessary? Little has
been done, but the field is quite accessible. The main difficulty comes
from our long Euclidean tradition, which makes it hard (at least for
the author) to get a feeling for the subject and to conjecture the
right theorems.
The type of problem which faces us is clear: A Minkowskian geometry
admits in general only the translations as motions and not the
rotations. Since the group of motions is smaller, we expect more invariants.
By passing from Euclidean to projective geometry, ellipses,
parabolas, and hyperbolas become indiscernible. The present case
presents \texttt{ the much more difficult converse problem, to discern objects
which have always been considered as identical}.} To illustrate this, Busemann noted that, contrarily to the Euclidean case, the general Minkowskian sphere defined by the set of points with the given Minkowskian distance $r$ from the origin $O$ holds the first property, but does not hold the third and fourth property of the list given here:
\begin{itemize}
\item It maximizes the volume among all sets of diameter $2r$.
\item It is envelope of planes normal to the rays with origin $r$\footnote{Normality means the so-called Birkhoff orthogonality of the Minkowski plane}.
\item It solves the isoperimetric problem.
\item It leads to the area $A(S)$ of a convex or sufficiently smooth non-convex simple closed surface $S$ bounding a set $K$ via the relation $A(S) =\lim_{r\rightarrow 0}(\vol(K(r))-\vol (K))/r $, where $K(r)$ is the outer parallel domain of the body $K$ of radius $r$.
\end{itemize}

The solution of the third property is another set, the so-called \emph{isoperimetrix} with respect to the sphere $B$. In the planar situation it is obtained by rotating the Euclidean \emph{polar} of $B$ by $\pi$, and thus it is also a convex body centered at the origin. The isoperimetrix can also be defined in an intrinsic way, using the concept of \emph{antinorm} (see \cite{martini-swanepoel}). This definition can be extended in a natural way for every even dimensional space (see \cite{gho-langi-spirova}),
and this convex body solves the fourth property if we take its enlarged copies to determine the parallel domain. The second property can be divided into two properties; the sphere $B$ has one of them, and the solution of the isoperimetric problem has the other one.

In \cite{busemann}, Busemann discussed the "axiom" for angle measures in the case of plane curves belonging to a class $\mathcal{S}$ of open Jordan curves, holding the additional property that any two distinct points lie on exactly one curve of $\mathcal{S}$. He defined the concepts of \textit{ray} $r$, \textit{angle} $D$ with \emph{legs} $r_1$ and $r_2$, and \textit{angle measure} $|D|$ on the set of angles having the following properties:
\begin{enumerate}
\item $|D|\geq 0$ (positivity),
\item $|D|=\pi$ if and only if $D$ is straight,
\item if $D_1$ and $D_2$ are two angles with a common leg but with no other common ray, then $|D_1\cup D_2|=|D_1|+|D_2|$ (additivity),
\item if $D_\nu \rightarrow D$, then $|D_\nu|\rightarrow D$ (continuity).
\end{enumerate}
He showed that these assumptions are sufficient to obtain many of the usual relationships
between angle measure and curvature. We note that Busemann collected the essential properties of an angle measure  that we have to require in every structure, where a natural concept of angle exists.

Lippmann \cite{lippmann} considered the classical Minkowski space defined on the $n$-dimensional Euclidean space by a "metrische Grundfunction" $F$, which is a positive, convex functional on the space being
homogeneous of first degree. In our terminology, $F$ is the \emph{norm-square function}. To have convexity (following Minkowski's definition), Lippmann required continuity of the second partial derivative, and positivity of the second derivative of $F$. Hence the unit ball of the corresponding space is always smooth. He used the arcus cosine of the bivariate function
$$
(x,y):=\frac{\sum x_i\frac{\partial }{\partial x_i}F(y)}{F(x)}
$$
to measure the angle between $x$ and $y$. This yields a concept of transversality, namely: $x$ is \emph{transversal} to $y$ if $(x, y)=0$. A wide variety
of angle measures referring to metric properties can be found in the literature. E.g., Lippmann's papers \cite{lippmannI,lippmannII} contain typically metric definitions of angle measures. For the situation in (normed or) Minkowski planes see, in addition to the papers already mentioned, Graham, Witsenhausen and Zassenhaus \cite{graham}. This paper refers to a useful metrical classification of angles by their measures, and a good review on this topic can be found in the book of Thompson \cite{thompson}.

In the last few decades some authors rediscovered this interesting problem in connection with the problem of orthogonality. We have to mention P. Brass who in \cite{brass} redefined the concept of angle measure as follows.

\begin{defi}\label{brassmeasure}
By an \textit{angle measure} we mean a measure $\mu$ on the unit circle $\partial B$ with center $O$ which
is extended in the usual translation-invariant way to measure angles elsewhere, and which has the
following properties:
\begin{enumerate}
\item $\mu(\partial B) = 2\pi$,
\item for any Borel set $S\subset \partial B$ we have $\mu(S) = \mu(-S)$, and
\item for each $p \in \partial B$ we have $\mu(\{p\}) = 0$.
\end{enumerate}
\end{defi}

This concept was used in the papers of D\"uvelmeyer \cite{duvelmeyer}, Martini and Swanepoel \cite{martini-swanepoel}, and Fankh\"anel \cite{fankhanel 1,fankhanel 2}.

Another direction of research is to give immediate metric definitions of the angle of two vectors. In this direction we can find also papers of P. M. Mili\v{c}i\v{c} \cite{milicic}, C. R. Diminnie, E. Z. Andalafte, R. W. Freese \cite{diminnie} or H. Gunawan, J. Lindiarni and O. Neswan \cite{gunawan}. Further related papers on angle measures are \cite{Dek1}, \cite{Dek2}, \cite{Dek3}, and \cite{Ling}.

As Busemann observed, the problem to find a natural definition of angular measure arises from the fact that the group of Minkowski rotations is very small. In a general normed space there are no such rotations which are also isometries of the space. More precisely, if $(V, \|\cdot\|)$ is a \emph{Minkowski plane} that is non-Euclidean, then the group $\mathcal{I}(2)$ of isometries of $(V, \|\cdot\|)$ is isomorphic to the semi-direct product of the translation group $\mathcal{T}(2)$ of $\mathbb{R}^2$ with a finite group of even order that is either a cyclic group of rotations or a dihedral group (see \cite{garcia}, \cite{thompson}, \cite{Ma-Sp}, and \cite{martini-spirova-strambach}). On the other hand, there are so-called left reflections (right-reflections) based on the notion of Birkhoff orthogonality (see \cite{Ma-Sp} and \cite{martini-spirova-strambach}). These are not isometries, but they have some important properties of isometries; e.g., they are affine mappings of the plane sending lines into lines; the product of three left reflections in parallel lines in a strictly convex Minkowski
plane is a left reflection in another line belonging to the same pencil of parallel lines; and the product of two left reflections in Birkhoff orthogonal lines is a symmetry of the plane. Unfortunately, if in a strictly convex and smooth Minkowski plane for left reflections the main lemma on three reflections with concurrent axes holds, then the plane is already Euclidean. Hence there is no chance to define an angle measure and also rotations by left reflections in the way that "a rotation is the product of two left reflections in non-parallel lines". This motivates our definition of Minkowski rotations.

\subsection{General rotations}

In order to define a concept of rotation for a Minkowski plane, we start with extending the definition of Brass by considering Borel measures in a larger class of curves, not only in the unit circle, and we will derive  angle measures for normed planes from it.

\begin{defi}\label{genmeasure}\normalfont Let $\gamma \subseteq X$ be a closed Jordan curve which is starlike with respect to a point $p$ of the interior of the region bounded by $\gamma$. An \emph{angle measure with respect to such a Jordan curve} is a (normalized) Borel measure $\mu_{\gamma}$ on $\gamma$ for which the following properties hold: \\

\noindent\textbf{(a)} $\mu_{\gamma}(\gamma) = 2\pi$;

\noindent\textbf{(b)} for any $q \in \gamma$ we have $\mu_{\gamma}(\{q\}) = 0$; and

\noindent\textbf{(c)} any non-degenerate arc of $\gamma$ has positive measure.\\

 An angle measure defined in this way provides a translation invariant measure of \emph{angles} in the plane, which we define to be the convex hulls of two rays with the same starting point, or the half-plane given by two opposite rays. Given an angle $(r_1,r_2)\angle$ with apex $a$, we define its \textit{generalized angle measure} $\mu_{\gamma,p}(r_1,r_2)$ to be the measure $\mu_{\gamma}$ of the arc determined on $\gamma$ by the image of $(r_1,r_2)\angle$ via the translation $x \mapsto x - a + p$. Figure \ref{figgenang} illustrates this concept.
\end{defi}

\begin{figure}[h]
\centering
\includegraphics{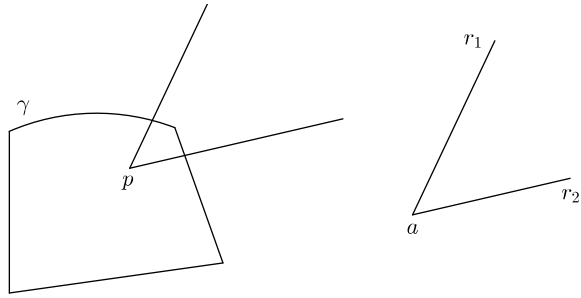}
\caption{The generalized angle measure given by $\mu_{\gamma}$ and $p$}
\label{figgenang}
\end{figure}

Using this notion of generalized angle measure we define now the generalized rotations in Minkowski planes. \\

\begin{defi}\label{generalrotation}\normalfont Let $(X,||\cdot||)$ be a Minkowski plane and let $\gamma$ be a closed Jordan curve which is starlike with respect to a point $p$ of the interior of the region bounded by $\gamma$. Let $\mu_{\gamma,p}$ be a generalized angle measure as in the previous definition. A \textit{general rotation} (with respect to $\mu_{\gamma,p}$) is a transform $\mathrm{rot}_{\mu_{\gamma,p}}:X \rightarrow X$ for which the following three properties hold: \\

\noindent\textbf{(a)} The transform $\mathrm{rot}_{\mu_{\gamma,p}}$ leaves invariant the pencil $\mathcal{R}(p)$ of rays with origin in $p$. In other words, if $r \subseteq X$ is a ray with origin $p$, then $\mathrm{rot}_{\mu_{\gamma,p}}(r)$ is also a ray with origin $p$.

\noindent\textbf{(b)} For each $\alpha > 0$, $\mathrm{rot}_{\mu_{\gamma,p}}$ leaves invariant the homothetic curve $\gamma_{\alpha,p} := p + \alpha(\gamma - p)$, i.e., for such a curve we have $\mathrm{rot}_{\mu_{\gamma,p}}\left(\gamma_{\alpha,p}\right) \subseteq \gamma_{\alpha,p}$.

\noindent\textbf{(c)} The function $r \in \mathcal{R}(p) \mapsto \mu_{\gamma,p}\left(\mathrm{rot}_{\mu_{\gamma,p}}(r),r\right)$ is constant. Intuitively, $\mathrm{rot}_{\mu_{\gamma,p}}$ ``rotates every ray of $\mathcal{R}(p)$ by a same angle".
\end{defi}

\begin{remark}\normalfont Notice that a general rotation can be considered as acting in the space of directions of $X$. Indeed, the set $\mathcal{R}(p)$ can be seen as this space. Later this viewpoint will be useful.
\end{remark}

We emphasize that any general rotation relies on a fixed closed Jordan curve $\gamma$, an inner point $p$ with respect to which $\gamma$ is starlike, and a generalized angle measure $\mu_{\gamma,p}$. On the other hand, these three informations yield a certain class of general rotations, which we denote by $\mathcal{R}(\gamma,\mu,p)$. We head now to describe an element of such a class in terms of the angle of rotation. For any $\theta \in [0,2\pi)$ we set $\mathrm{rot}_{\theta}:X\rightarrow X$ as follows: if $q_1 \in \gamma$, then $q_1$ is mapped to the (unique) point $q_2 \in \gamma$ taken counterclockwise, say, for which the rays $r_1=\left.[p,q_1\right>$ and $r_2=\left.[p,q_2\right>$ are such that $\mu(r_1,r_2) = \theta$. Now, any point $q \in X\setminus\gamma$ can be written in the form $q = p + \alpha\left(\mathrm{rad}_{\gamma,p}(\left.[p,q\right>)-p\right)$ for some $\alpha \geq 0$, where $\mathrm{rad}_{\gamma,p}:\mathcal{R}(p)\rightarrow \gamma$ is the \emph{radial function} which associates each ray starting at $p$ to its intersection with $\gamma$. We just set
\begin{align*} \mathrm{rot}_{\theta}(q) = p + \alpha\left(\mathrm{rot}_{\theta}\left(\mathrm{rad}_{\gamma,p}\left(\left.[p,q\right>\right)\right) - p\right). \end{align*}

It is clear that $\mathcal{R}(\gamma,\mu,p) = \{\mathrm{rot}_{\theta}\}_{\theta\in[0,2\pi)}$. This description indicates that a class $\mathcal{R}(\gamma,\mu,p)$ has a group structure under composition, as in the standard Euclidean case. This is summarized in the following lemma.

\begin{lemma} For a class $\mathcal{R}(\gamma,\mu,p)$ we have the following properties:

\normalfont
\noindent\textbf{(a)} \emph{Regarding composition}, \textit{$\mathcal{R}(\gamma,\mu,p)$ is an abelian group. More precisely, we have $\mathrm{rot}_{\theta_1}\circ\mathrm{rot}_{\theta_2} = \mathrm{rot}_{\theta_1\oplus \theta_2}$, where $\oplus$ is the sum modulo $2\pi$.}

\noindent\textbf{(b)} \textit{For any $q\in\gamma$, the application $l\mapsto\mathrm{rot}_{\theta}(q)$ is a bijection from $[0,2\pi)$ to $\gamma$.}

\end{lemma}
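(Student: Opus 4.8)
The plan is to reduce the whole statement to the elementary behaviour of a single real function --- the ``cumulative angle'' obtained by integrating $\mu_{\gamma}$ along $\gamma$. First I would fix a counterclockwise parametrization of $\gamma$, i.e., a homeomorphism of the circle onto $\gamma$, and lift it to a $1$-periodic surjection $\pi_{\gamma}\colon\mathbb{R}\to\gamma$ that is injective on each half-open interval of length $1$. Choosing a base parameter, I would define $F\colon\mathbb{R}\to\mathbb{R}$ by $F(0)=0$, $F(x)-F(y)=\mu_{\gamma}(\pi_{\gamma}((y,x]))$ for $y<x\le y+1$, and $F(x+1)=F(x)+2\pi$ otherwise; condition \textbf{(a)} of Definition~\ref{genmeasure}, namely $\mu_{\gamma}(\gamma)=2\pi$, is exactly what makes this prescription consistent.

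The heart of the proof is the claim that $F$ is a continuous, strictly increasing bijection of $\mathbb{R}$ onto itself satisfying $F(x+1)=F(x)+2\pi$. Strict monotonicity is precisely condition \textbf{(c)}: for $y<x<y+1$ the set $\pi_{\gamma}((y,x])$ contains a non-degenerate arc, hence has positive $\mu_{\gamma}$-measure, so $F(x)>F(y)$. Continuity follows from condition \textbf{(b)} together with finiteness of $\mu_{\gamma}$: if $x_{n}\downarrow x$, the sets $\pi_{\gamma}((x,x_{n}])$ decrease to $\emptyset$, while if $x_{n}\uparrow x$, the sets $\pi_{\gamma}((x_{n},x])$ decrease to the single point $\{\pi_{\gamma}(x)\}$, which has measure $0$; continuity from above of the finite measure $\mu_{\gamma}$ then gives $F(x_{n})\to F(x)$ in both cases. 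A continuous strictly increasing self-map of $\mathbb{R}$ is an order isomorphism onto its image, and the relation $F(x+1)=F(x)+2\pi$ forces that image to be all of $\mathbb{R}$; hence $F^{-1}$ is again a continuous strictly increasing bijection, now with $F^{-1}(\theta+2\pi)=F^{-1}(\theta)+1$. I would point out that this claim already subsumes the well-definedness used in constructing $\mathrm{rot}_{\theta}$: for $q_{1}=\pi_{\gamma}(x)\in\gamma$ and $\theta\in[0,2\pi)$, the point $q_{2}=\pi_{\gamma}(F^{-1}(F(x)+\theta))$ is the unique point of $\gamma$, reached counterclockwise from $q_{1}$, for which the subtended arc has $\mu_{\gamma}$-measure $\theta$.

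With $F$ understood, part \textbf{(a)} is a one-line computation on $\gamma$: there $\mathrm{rot}_{\theta}(\pi_{\gamma}(x))=\pi_{\gamma}(F^{-1}(F(x)+\theta))$, so for $\theta_{1},\theta_{2}\in[0,2\pi)$
\[
\mathrm{rot}_{\theta_{1}}(\mathrm{rot}_{\theta_{2}}(\pi_{\gamma}(x)))=\pi_{\gamma}(F^{-1}(F(x)+\theta_{1}+\theta_{2})),
\]
and since $F^{-1}(\,\cdot\,+2\pi)=F^{-1}(\,\cdot\,)+1$ while $\pi_{\gamma}$ is $1$-periodic, the right-hand side depends only on $\theta_{1}\oplus\theta_{2}$. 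To transport this from $\gamma$ to all of $X$ I would invoke the homothety-invariant definition of $\mathrm{rot}_{\theta}$: the point $p$ is fixed by every $\mathrm{rot}_{\theta}$, and for $q\ne p$, writing $q=p+\alpha(q_{1}-p)$ with $q_{1}=\mathrm{rad}_{\gamma,p}([p,q\rangle)$ and $\alpha\ge 0$, the ray $[p,\mathrm{rot}_{\theta}(q)\rangle$ passes through $\mathrm{rot}_{\theta}(q_{1})$ with unchanged homothety ratio $\alpha$, so the identity $\mathrm{rot}_{\theta_{1}}\circ\mathrm{rot}_{\theta_{2}}=\mathrm{rot}_{\theta_{1}\oplus\theta_{2}}$ propagates verbatim. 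Hence $\mathcal{R}(\gamma,\mu,p)=\{\mathrm{rot}_{\theta}\}_{\theta\in[0,2\pi)}$ is closed under composition, has neutral element $\mathrm{rot}_{0}=\mathrm{id}$ and inverses $\mathrm{rot}_{\theta}^{-1}=\mathrm{rot}_{(2\pi-\theta)\bmod 2\pi}$, and is abelian because $\oplus$ is commutative; equivalently, $\theta\mapsto\mathrm{rot}_{\theta}$ is a group isomorphism from $(\mathbb{R}/2\pi\mathbb{Z},\oplus)$ onto $\mathcal{R}(\gamma,\mu,p)$. Part \textbf{(b)} then falls out at once: for fixed $q=\pi_{\gamma}(x)\in\gamma$, the map $\theta\mapsto\mathrm{rot}_{\theta}(q)=\pi_{\gamma}(F^{-1}(F(x)+\theta))$ is the composite of the shift $\theta\mapsto F(x)+\theta$, the bijection $F^{-1}$, and $\pi_{\gamma}$ restricted to a half-open interval of length $1$, hence a bijection from $[0,2\pi)$ onto $\gamma$.

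The only real obstacle I anticipate is the displayed claim about $F$ --- concretely, deducing continuity of $F$ from the atomlessness in condition \textbf{(b)} via monotone continuity of the finite measure $\mu_{\gamma}$, and then keeping the bookkeeping of the cyclic (mod $2\pi$) structure straight so that the composition law comes out as a sum modulo $2\pi$ rather than an honest sum on $[0,2\pi)$. Everything downstream of that claim is formal.
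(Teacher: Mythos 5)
Your proof is correct and follows essentially the same route as the paper's, which disposes of the lemma in one line as ``an immediate consequence of the additivity of a measure and of item (c) from Definition~\ref{genmeasure}''. Your cumulative-angle function $F$ is exactly the bookkeeping that makes that one-liner rigorous, and you rightly note that condition (b) (atomlessness) is also needed --- for the continuity of $F$ and hence for the existence half of the bijectivity claims --- a point the paper's terse proof leaves implicit.
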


\begin{proof} This is an immediate consequence of the additivity of a measure and of item \textbf{(c)} from Definition \ref{genmeasure}.

\end{proof}

We highlight an interesting fact: The standard Euclidean rotation group can be obtained in any Minkowski plane. We just have to consider the group $\mathcal{R}(\gamma,\mu,o)$ where $\gamma$ is the \emph{L\"{o}wner ellipse}, which is defined as the ellipse of maximal volume contained in $B$, and $\mu$ is the measure given by twice the area of its sectors. A proof of the existence of the L\"{o}wner ellipse can be found in \cite{thompson}.\\

Next we give two examples of general rotations in the Euclidean plane. The first one relies on an area-based measure for an ellipse, which is clearly well defined. In the second we use the arc-length measure referring to a nephroid.

\begin{example}
\begin{figure}
\centering
\includegraphics{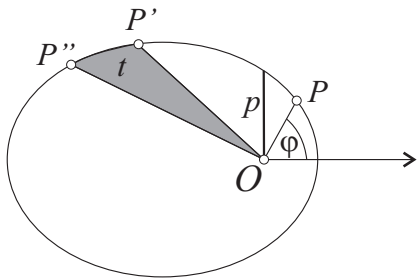}
\caption{Area-based rotation and the Kepler's model}
\label{heliofig}
\end{figure}
Consider the Euclidean plane and the system of ellipses with common focus at the origin $O$ and with major axis on the $x$-axis of the coordinate system, such that the positive half-line of $x$ contains the closest point of the ellipse (see Fig. \ref{heliofig}). In that polar coordinate system (which is called the heliocentric coordinate system for the ellipse), for which the ray $\varphi=0$ is the positive half axis $x$, we can write the radial function $r(\varphi)$ of the ellipse $G$ by the formula
$$
r(\varphi)=\frac{p}{1+\varepsilon\, \cos \varphi},
$$
where $p$ is the semi-latus rectum of the ellipse and $\varepsilon$ is the eccentricity of it, respectively. Let $\mu((\varphi',\varphi'')\angle)$ be the area of the sector enclosed by $\varphi'$, $\varphi''$, and $G$ be the arc between these lines. Hence
$$
\mu((\varphi',\varphi'')\angle)=\frac{1}{2}\int\limits_{\varphi'}^{\varphi''}\left(\frac{p}{1+\varepsilon\, \cos \varphi}\right)^2\mathrm{d}\varphi.
$$
With respect to $\mu$ and $G$ from above, for every real number $0\leq t\leq 2\pi$ there is a generalized rotation of the Euclidean plane about $O$ with this angle $t$. By Kepler's second law about planetary motions, the angle $t$ of a generalized rotation is proportional to the time of the motion of the planet. Hence the generalized rotation with angle $t$ maps the current position $P'$ of the planet to that point $P''$ of the orbit where the planet arrives after time $t$.
\end{example}

The principle of measuring the angle proportional to the area of the sector intersected by the angle domain from the basic disk $\left(G\cup \mathrm{int} G\right)$ works in all Minkowski planes and for all basic curves $G$. Note that in the Euclidean plane with the unit circle as basic curve, this choice of $\mu$ gives the usual angle measure, and that we get the usual rotations as generalized rotations by choosing $P$ to be the origin $O$. An advantage of this choice is affine invariance, but there is also a big disadvantage. Namely, the length of the arc $G$ containing the domain of the angle cannot be calculated easily from this angle measure. (As a known example, we note that the calculation of the arc-length of an ellipse leads to a complete elliptic integral of second kind, which has no closed-form solution in terms of elementary functions.) In this paper we have to create tools for the so-called \emph{rolling process}, which is a type of motion that combines rotation and translation of an object with respect to a given curve. More precisely, we combine two curves
such that they are in contact with each other without sliding (no friction). Hence we have to compare the angle of rotations of the two curves by the fact that the swept arc-lengths do agree in the time of the moving. This requires a nice connection between the angle of the generalized rotation and the corresponding arc-length of the basic curve $G$.

\begin{example}
Consider again the Euclidean plane with a cartesian coordinate system, and let $G$ be the nephroid of the unit circle with cusps on the $x$-axis, and $P$ be the origin. We define the \textit{nephroid} as an epi-cycloid created when a circle with diameter $1$ rolls on the unit circle (see Fig. \ref{nephroidfig}). It is easy to see that the parametric equation $r(t)$ of it is
\begin{equation}\label{nephroid}
r(t)=\left(\begin{array}{c}
x \\
y
\end{array}\right)=\frac{1}{2}\left(\begin{array}{c}-3\cos t+\cos 3t \\
-3\sin t+\sin 3t
\end{array}
\right)\,,
\quad 0\leq t\leq 2\pi\,,
\end{equation}
where $(t,1)$ are the polar coordinates of the point $Q\in S$. Denote by $R$ that point of the $x$-axis for which $QR$ is the common tangent of the two circles at $Q$. If $X$ is the second intersection point of the half-line $\overrightarrow{OQ}$ by the rolling circle, then the line $XR$ intersects a point $P$ of the nephroid from the rolling circle. In Figure \ref{nephroidfig} we can see the construction of two points $P_1$ and $P_2$, respectively. One of the curiosities of the nephroid is that there is a closed form to its arc-length function on the upper coordinate half plane. The length of the arc containing the points with parameters between the values $0\leq t_1<t_2\leq \pi$ is equal to $3(\sin t_2-\sin t_1)$. The generalized rotation at the origin with respect to the nephroid (and its arc-length based angle measure) sends the ray $\overrightarrow{OP_1}$ to the ray $\overrightarrow{OP_2}$, with angle measure
$$
\varphi:=\mu(OP_1,OP_2)=3(\cos t_1-\cos t_2).
$$
Hence the three-fold distance of the vertical segments $T_iQ_i$ for $i=1,2$ represents the absolute value of the angle of the rays $\overrightarrow{OP_1}$, $\overrightarrow{OP_2}$. (Thus the points $T_i$ are on the $x$-axis, respectively.)
Hence we can construct the rotated image of any point $P$ of the nephroid as follows:
\begin{figure}\label{nephroidfig}
 \centering
  \includegraphics[]{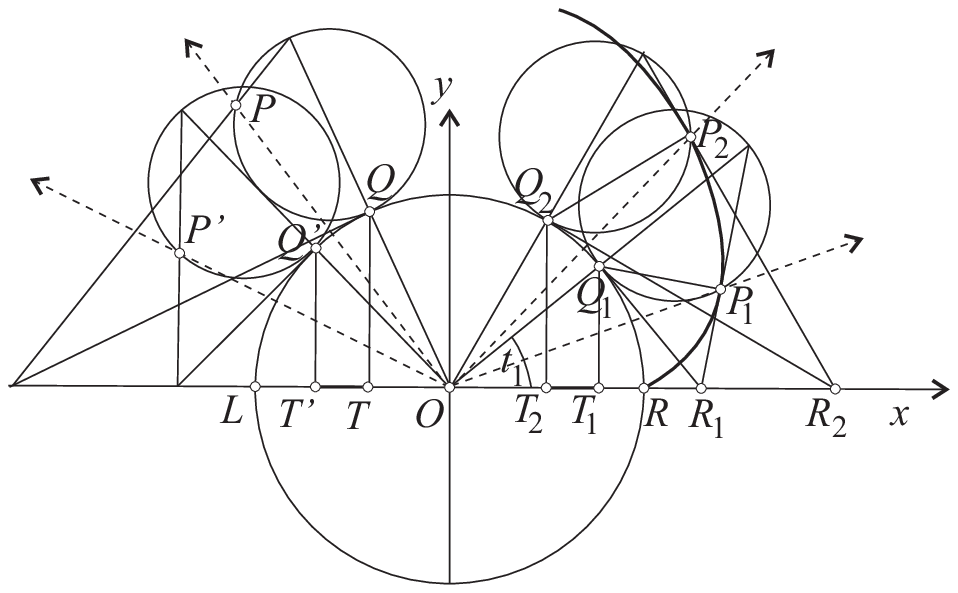}\\
  \caption{Arc-length based rotation with respect to a nephroid}
\label{nephroidfig}
\end{figure}
Assume that the point $P$ is on the upper half of the nephroid. By the intersection of the $x$-axis and a vertical line through the point $Q$ we determine the point $T$, consider the directed segment $T_1T_2$ and mark it off from $T$ on the $x$-axis. If the obtained point $T'$ is on the horizontal diameter $LR$ of the unit circle, then we can determine that point $Q'$ from the unit circle which is above $T'$ and corresponds with the searched point $P'$. If $T'$ is not on the diameter, then we mark off that outer subsegment $\overline{LT'}$ of $\overline{TT'}$ from $L$ in the relative interior of $LR$, and denote the obtained point of $\overline{LR}$ by $T'$. In this case our construction gives an image point which is on the upper half of the nephroid. It is obvious that, analogously, this construction can also be extended to the lower half of the nephroid.
\end{example}

The standard angle in the Euclidean plane can be obtained by considering arc-lengths in the unit circle, and hence the angle theory can be given in terms of the Euclidean norm. Of course, this can be carried over to Minkowski planes, and the general rotations given by the arc-length measure are possibly the most natural rotations in normed planes. We head now to take a better look at this particular case. We denote by $l$ the Minkowski arc-length of a curve defined in the usual way: as the supremum of the sums of the lengths of the polygonal approximations of $\gamma$. Let $\gamma \in (X,||\cdot||)$ be a closed rectifiable Jordan curve starlike with respect to an inner point $p$, and denote by $\mu_l$ the normalized Minkowski arc-length measure in $\gamma$. Formally, if $q_1,q_2 \in \gamma$, then

\begin{align*}\mu_l(\mathrm{arc}_{\gamma}(q_1,q_2)) = 2\pi\frac{l(\mathrm{arc}_{\gamma}(q_1,q_2))}{l(\gamma)}.\end{align*}

Of course, $\mu_l$ is a generalized measure in the sense of Definition \ref{genmeasure}. Since the measure $\mu_l$ is induced by the geometry of the plane rather than being inherent to $\gamma$, one may wonder how  the group $\mathcal{R}(\gamma,\mu_l,p)$ does rely on the initial $\gamma$ and $p$ that we have chosen. For example, in the Euclidean plane we can obtain the standard angle measure by considering the arc-length measure in any homothet of the unit circle and doing the usual normalization. Our next lemma shows that this is also true for arbitrary Minkowski planes.

\begin{lemma} Let $\gamma \in X$ be a closed rectifiable Jordan curve starlike with respect to an inner point $p$, and let $\mu_l$ be the (normalized) Minkowskian arc-length measure. Given $\alpha > 0$, denote by $\gamma_{\alpha,p}$ the curve $p + \alpha(\gamma -p)$ homothetical to $\gamma$. Then $\mathcal{R}(\gamma,\mu_l,p) = \mathcal{R}(\gamma_{\alpha,p},\mu_l,p)$.
\end{lemma}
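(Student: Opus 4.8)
The plan is to show that the two classes of general rotations coincide by establishing that, for every $\theta\in[0,2\pi)$, the rotation $\mathrm{rot}_\theta$ built from $(\gamma,\mu_l,p)$ is the \emph{same transformation of $X$} as the rotation built from $(\gamma_{\alpha,p},\mu_l,p)$. Since both classes are indexed by the angle of rotation (as explained before the preceding lemma), it suffices to match them parameter by parameter. The two key observations are: first, the homothety $h_{\alpha,p}:x\mapsto p+\alpha(x-p)$ carries $\gamma$ onto $\gamma_{\alpha,p}$ and fixes the pencil $\mathcal R(p)$ ray by ray (because $h_{\alpha,p}$ maps each ray with origin $p$ onto itself); and second, $h_{\alpha,p}$ scales Minkowskian arc-length by the constant factor $\alpha$, hence for any sub-arc $\mathrm{arc}_\gamma(q_1,q_2)$ one has $l\bigl(h_{\alpha,p}(\mathrm{arc}_\gamma(q_1,q_2))\bigr)=\alpha\, l(\mathrm{arc}_\gamma(q_1,q_2))$ and in particular $l(\gamma_{\alpha,p})=\alpha\, l(\gamma)$. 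Dividing through, the normalization in the definition of $\mu_l$ kills the factor $\alpha$, so the normalized arc-length measure is preserved: $\mu_l\bigl(\mathrm{arc}_{\gamma_{\alpha,p}}(h_{\alpha,p}q_1,h_{\alpha,p}q_2)\bigr)=\mu_l\bigl(\mathrm{arc}_\gamma(q_1,q_2)\bigr)$.

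From here the proof is essentially bookkeeping with the explicit description of $\mathrm{rot}_\theta$ given just before the previous lemma. Fix $\theta$ and let $\rho$ be the rotation of the class $\mathcal R(\gamma,\mu_l,p)$ with angle $\theta$, and $\rho'$ the one of the class $\mathcal R(\gamma_{\alpha,p},\mu_l,p)$ with the same angle $\theta$. I would check that $\rho'$ satisfies the three defining properties (a), (b), (c) of Definition \ref{generalrotation} relative to the data $(\gamma,\mu_l,p)$: properties (a) and (b) are immediate since both $\rho$ and $\rho'$ leave $\mathcal R(p)$ invariant and preserve every homothet $\gamma_{\beta,p}$ (the family of homothets of $\gamma$ coincides with the family of homothets of $\gamma_{\alpha,p}$), and property (c) — that $r\mapsto \mu_{\gamma,p}(\rho'(r),r)$ is the constant $\theta$ — is exactly where the arc-length invariance above is used: the measure $\mu_l$ computed along $\gamma$ of the arc cut off by a ray $r$ and $\rho'(r)$ equals the measure $\mu_l$ computed along $\gamma_{\alpha,p}$ of the corresponding arc, by the normalization identity. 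Conversely $\rho$ satisfies the defining properties relative to $(\gamma_{\alpha,p},\mu_l,p)$ by the symmetric argument (apply $h_{1/\alpha,p}$). Since a general rotation with a prescribed angle of rotation is unique within its class (this follows from item (c) of Definition \ref{genmeasure} together with the radial-function description, and is the content of part (b) of the previous lemma), we conclude $\rho=\rho'$, and letting $\theta$ range over $[0,2\pi)$ gives $\mathcal R(\gamma,\mu_l,p)=\mathcal R(\gamma_{\alpha,p},\mu_l,p)$.

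The only genuinely non-formal point is the claim that a homothety with ratio $\alpha$ scales Minkowskian arc-length by $\alpha$; I expect this to be the main (though minor) obstacle in the write-up. It follows directly from the definition of $l$ as the supremum of lengths of inscribed polygonal lines: $h_{\alpha,p}$ maps an inscribed polygon of $\gamma$ to an inscribed polygon of $\gamma_{\alpha,p}$ with every edge $\|{\bf x}-{\bf y}\|$ replaced by $\|\alpha{\bf x}-\alpha{\bf y}\|=\alpha\|{\bf x}-{\bf y}\|$ (using absolute homogeneity of the norm and translation invariance), so each polygonal sum is multiplied by $\alpha$ and hence so is the supremum; the correspondence of inscribed polygons is a bijection, so no length is lost. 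One should also note in passing that $\gamma_{\alpha,p}$ is again a closed rectifiable Jordan curve starlike with respect to $p$, so that $\mu_l$ on $\gamma_{\alpha,p}$ is a legitimate generalized angle measure and $\mathcal R(\gamma_{\alpha,p},\mu_l,p)$ is defined; this is clear, since $h_{\alpha,p}$ is a homeomorphism of $X$ fixing $p$ and preserving starlikeness with respect to $p$.
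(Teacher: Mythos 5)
Your proposal is correct and rests on the same key fact as the paper's proof: the homothety $h_{\alpha,p}$ scales Minkowskian arc-length by $\alpha$, which cancels under the normalization of $\mu_l$, so corresponding arcs have equal measure. The paper verifies this scaling via a regular parametrization in the smooth case and falls back on polygonal approximations otherwise, whereas you argue directly from the supremum-over-inscribed-polygons definition (which covers both cases at once); your additional bookkeeping matching rotations angle by angle is a more explicit version of what the paper leaves implicit.
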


\begin{proof} It is enough to prove that any rotation of $\mathcal{R}(\gamma,\mu_l,p)$ preserves the length of the arcs of the homothetical curves $\gamma_{\alpha,p}$. If $\gamma$ is smooth, then we can consider a regular parameterization $\gamma(t):J\subseteq\mathbb{R}\rightarrow \gamma$ of $\gamma$ and the associated parametrization of $\gamma_{\alpha,p}$ given by $\gamma_{\alpha,p}(t)=p+\alpha(\gamma(t)-p)$. Thus we can write
\begin{align*} l\left(\mathrm{arc}_{\gamma_{\alpha,p}}(q_1^*,q_2^*)\right) = \int_{t_1}^{t_2}||\gamma'_{\alpha,p}(t)||dt = \alpha\int_{t_1}^{t_2}||\gamma'(t)||dt = \alpha l\left(\mathrm{arc}_{\gamma}(q_1,q_2) \right), \end{align*}
where $q_1,q_2 \in \gamma$ are arbitrary points, and $q_1^*$ and $q_2^*$ are their respective images in $\gamma_{\alpha,p}$ by the considered homothety.

If $\gamma$ is not smooth, we just have to notice that every polygonal approximation of $\gamma$ can be obtained homothetically for $\gamma_{\alpha,p}$.

\end{proof}

Despite having the good property shown above, the arc-length rotations are not at all linear transformations. For this reason we may face some difficulties when trying to derive closed formulas for them. But we have some exceptions. Next we give an example for the Minkowski arc-length rotation which coincides with an usual Euclidean rotation.

\begin{example} Consider the norm $||\cdot||_{\infty}$ defined in $\mathbb{R}^2$ to be $||(x,y)||_{\infty} = \max\{|x|,|y|\}$. The general rotation $\mathrm{rot}_{\frac{\pi}{2}}:X\rightarrow X$ given by the Minkowski arc-length measure in the unit circle, and with respect to the origin, coincides with the usual Euclidean rotation of angle $\frac{\pi}{2}$. Indeed, the unit circle $B$ of $\left(\mathbb{R}^2,||\cdot||_{\infty}\right)$ is the square with vertices $\{(\pm1,\pm1)\}$ which, for the sake of simplicity of the used notation, we may denote in the counterclockwise way by $v_1,v_2,v_3$, and $v_4$. If $v \in [v_1,v_2]$, then $\mathrm{rot}_{\frac{\pi}{2}}$ clearly maps $v$ to the point $w$ of the segment $[v_2,v_3]$ for which $||w - v_3|| = ||v-v_2||$ (see Figure \ref{figgenrot}).

\end{example}

\begin{figure}[h]
\centering
\includegraphics{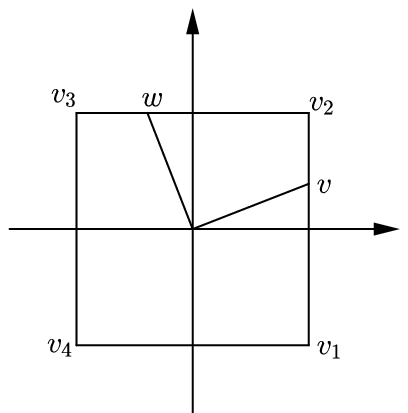}
\caption{$\mathrm{rot}_{\frac{\pi}{2}}(v) = w$}
\label{figgenrot}
\end{figure}

Translations are a simple kind of motion in Minkowski planes, and they are clearly isometries. The general rotations can also be seen as motions in the Minkowski plane, which are not necessarily isometries. Thus, we may consider the composition of translations and general rotations to obtain a larger class of motions in the Minkowski plane.

\begin{defi}\normalfont Let $\mathcal{R}(\gamma,\mu,p)$ be a fixed group of general rotations, and for any $v,w \in X$ let $t_{vw}:X\rightarrow X$ denote the translation which maps $v$ to $w$, i.e., $t_{vw}(x) = x-v+w$. We define the \textit{motion group generated by $\mathcal{R}(\gamma,\mu,p)$} to be the group of applications of the form $t_{pq}\circ\mathrm{rot}\circ t_{qp}:X\rightarrow X$, where $q \in X$ and $\mathrm{rot} \in \mathcal{R}(\gamma,\mu,p)$. When there is no possibility of confusion on the group of general rotations considered here, we will denote the motion group by $\mathcal{M}_r$.
\end{defi}

\begin{remark}\label{remark:isometries} \normalfont Notice that the motion group associated to $\mathcal{R}(\partial B,\mu_l,o)$, where $\mu_l$ is, as usual, the Minkowski arc-length measure, contains all
direction-preserving isometries of the plane.
\end{remark}

To finish this section, we highlight that, up to choosing an initial point, any group of general rotations is associated to a system of polar coordinates in the Minkowski plane. Indeed, let, as usual, $\gamma$ be a closed Jordan curve starlike with respect to a point $p$, and let $\mu$ be a generalized measure in $\gamma$. Fix a point $q_0 \in \gamma$, and consider the application $v\in\gamma\mapsto\mu(q_0,v)\in[0,2\pi)$ which associates each point $v \in\gamma$ to the measure of $\mathrm{arc}_{\gamma}(q_0,v)$ taken counterclockwise from $q_0$. Hence any point $q \in X\setminus\{p\}$ is uniquely determined by the coordinates
\begin{align*} \left(||q-p||,\mu\left(q_0,\mathrm{rad}_{\gamma,p}\left(\left.[p,q\right>\right)\right)\right),
\end{align*}
where we recall that $\mathrm{rad}_{\gamma,p}:\mathcal{R}(p)\rightarrow\gamma$ is the usual \emph{radial function} with respect to $\gamma$ and $p$. Polar coordinates can be very useful to describe the images of points of the plane by a general rotation. Indeed, if $\mathrm{rot}_{\theta} \in \mathcal{R}(\gamma,\mu,p)$, then $\mathrm{rot}_{\theta}(q)$ is clearly given by the coordinates
\begin{align*} \left(||q-p||,\mu\left(q_0, \mathrm{rad}_{\gamma,p}\left(\left.[p,q\right>\right)\right)\oplus\theta\right),
\end{align*}
where, again, the symbol $\oplus$ denotes the sum modulo $2\pi$. Observe that the system of polar coordinates given by the Minkowski arc-length measure in the unit circle and with respect to the origin is a very natural concept of polar coordinates that does only rely on the norm of the plane.

\section{Motions of rigid systems in the Euclidean plane}

Consider a plane $\Sigma'$ which is moving on the fixed plane $\Sigma $. The two simplest possibilities for such movements are given by translation and rotation. In Euclidean geometry we can substitute the planes with cartesian coordinate frames $Oxy$ and $O'uv$. When we would like to describe the motion of a point $P$ of the moving plane, we need the coordinates $u,v$ of the point $P$ in the moving frame, the coordinates $p,q$ of $O'$ in the fixed coordinate system, and the angle $\varphi $ of the positive half of the $X$-axis of the fixed frame with the positive half of the $x$-axis of the moving frame. We get the coordinates $x$, $y$ of the point $P$ in the fixed system by
$$
x=p+u\, \cos \varphi -v\, \sin \varphi\,, \qquad y=q+u\, \sin \varphi +v\, \cos \varphi.
$$
Here $p,q,\varphi$ are functions of a quantity $t$ which determines the motion. (For example, $t$ can denote the time, or any other metric parameter.) Assume that $\varphi (t)$ is not zero on an interval of $t$. Then it can be inverted, and $p,q$ can also be considered as a function of $\varphi$. (This assumption says that our motion cannot contain translations in that domain. We call such a motion \emph{non-translative planar motion}.) The derivative of the coordinate functions with respect to $\varphi$ gives the coordinates of the velocity vector of the point $P$.
It is more convenient to use vector equality, and hence we introduce some further notion. Let
$$
\R(\varphi)=\left(\begin{array}{cc}
                \cos \varphi & -\sin \varphi \\
                \sin \varphi & \cos \varphi
                \end{array}
            \right)
$$
denote the rotation about the origin with signed angle $\varphi$. Then the first equation array has the form
\begin{equation}\label{movingequation}
{\bf x}={\bf p}+\R(\varphi){\bf u}\,.
\end{equation}
If $\Q=\R(\pi/2)$ denotes the rotation with $\pi/2$, we have the following rules:
\begin{equation}\label{eq:onQ}
\Q^2=-\E, \quad \Q^3=\Q^{-1}=\overline{\Q}=-\Q, \quad \Q^4=\E,
\end{equation}
where $\E$ is the unit matrix. We denote by dot the \emph{derivative with respect to} $\varphi$, which means in this section the Euclidean arc-length parameter. It is clear that
\begin{equation}\label{eq:onR}
\dot{\R}=\Q\R, \quad \dot{(\R^{-1})}=-\Q\R.
\end{equation}
For every value of $\varphi$ there is precisely one point ${\bf u}_0$ of the moving plane for which the velocity vector vanishes. This is
$$
{\bf u}_0=\Q\R^{-1}\dot{\bf p}.
$$
This point ${\bf u}_0$ of the moving plane is a so-called \emph{instantaneous center ({\rm or} instantaneous pole)} of the motion, and the set of these points is the \emph{moving polode (centroid)}, or curve $\gamma'$ of instantaneous poles, of the moving plane. The points of the moving polode can also be obtained as rest in the frame. These points ${\bf x}_0$ are described by
$$
{\bf x}_0={\bf p}+\R{\bf u}_0={\bf p}+\Q\dot{{\bf p}}\,.
$$
 They form the so-called  \emph{fixed polode (centroid)}, or curve $\gamma $ of instantaneous centers, in the fixed plane. We examine the motion with respect to the point ${\bf x}_0$. If ${\bf x}$ is arbitrary, then ${\bf x}-{\bf x}_0=\R{\bf u}-{\bf Q}\dot{\bf p}$, and using the equality  $\dot{\bf x}=\dot{\bf p}+\Q\R{\bf u}$, we have $\Q \dot{\bf x}=\Q \dot{\bf p}+\Q\R{\bf u}$. Since ${\bf x}-{\bf x}_0=\R {\bf u}-\Q\dot{\bf p}$, we get that
$$
\dot{\bf x}=\Q({\bf x}-{\bf x}_0).
$$
Hence the velocity vector of the motion at the point ${\bf x}$ is orthogonal to the position vector from ${\bf x}_0$ to ${\bf x}$. This implies that the moving system in the given moment is a rotation about the center ${\bf x}_0$. Observe that the velocity vectors of the two polodes at their common point agree; in fact,
$$
\dot{{\bf u}}_0=\dot{\Q\R^{-1}\dot{\bf p}}=\R^{-1}\dot{\bf p}+\Q\R^{-1}\ddot{\bf p}=\dot{\bf x}_0.
$$
Hence the arc-length elements of the two curves agree, and we get that in every moment the two curves are touching. Also we see that their arc-lengths calculated from a point $\varphi_0$ to the point $\varphi $ have the same value. Hence the moving polode $\gamma'$ \emph{rolls without slipping} (or without friction) on the fixed polode $\gamma$, and this is the only rolling process which corresponds to the given motion of the planes. Hence we see the fact \emph{that every non-translatory planar motion of a rigid mechanical system in the plane can be considered as the rolling process of a curve rigidly connected with the system on a fixed curve in the plane}.
This motivates the so-called main theorem of planar kinematics, namely
\begin{theorem}\label{thm:euclideanmain}
At every moment, any constrained non-translatory planar motion can be approximated
(up to the first derivative) by an \emph{instantaneous rotation}. The
center of this rotation is called the \emph{instantaneous pole}. Thus, for each
position of the moving plane, we generally have exactly one point with velocity zero (as a
result of that, the instantaneous pole is also called velocity center).
\end{theorem}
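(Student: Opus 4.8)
The plan is to package, as a proof, the chain of identities already assembled in the discussion preceding the statement. One starts from the description \eqref{movingequation} of the motion, ${\bf x}={\bf p}(\varphi)+\R(\varphi){\bf u}$, where ${\bf u}$ is the (constant) coordinate vector in the moving frame of a point of $\Sigma'$; here the non-translatory hypothesis is used exactly once, to guarantee $\dot\varphi\neq 0$ and hence to permit reparametrizing the motion by $\varphi$, so that ${\bf p}=(p,q)$ becomes a function of $\varphi$ and the dot means $\dif/\dif\varphi$.

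First I would differentiate \eqref{movingequation} and invoke \eqref{eq:onR}: since $\dot\R=\Q\R$, the velocity field is $\dot{\bf x}=\dot{\bf p}+\Q\R{\bf u}$. Because $\Q\R$ is an invertible linear map, the equation $\dot{\bf x}={\bf 0}$ has a unique solution ${\bf u}={\bf u}_0$; solving it and using $\Q^{-1}=-\Q$ from \eqref{eq:onQ} together with the commutativity of plane rotations yields ${\bf u}_0=\Q\R^{-1}\dot{\bf p}$. Its image in the fixed frame is ${\bf x}_0={\bf p}+\R{\bf u}_0={\bf p}+\R\Q\R^{-1}\dot{\bf p}={\bf p}+\Q\dot{\bf p}$, again by commutativity of rotations; so ${\bf x}_0$ is the unique point of $\Sigma$ whose velocity vanishes at the instant $\varphi$.

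Next I would rewrite the velocity of an arbitrary point ${\bf x}$ relative to ${\bf x}_0$. From ${\bf x}-{\bf x}_0=\R{\bf u}-\Q\dot{\bf p}$ and $\dot{\bf x}=\dot{\bf p}+\Q\R{\bf u}$, multiplying the second equality by $\Q$ and using $\Q^2=-\E$ gives $\Q\dot{\bf x}=\Q\dot{\bf p}-\R{\bf u}=-({\bf x}-{\bf x}_0)$, i.e.
$$\dot{\bf x}=\Q({\bf x}-{\bf x}_0).$$
Now fix an instant $\varphi=\varphi_1$, set ${\bf c}:={\bf x}_0(\varphi_1)$, and compare the motion with the uniform rotation $\rho(\varphi):={\bf c}+\R(\varphi-\varphi_1)\bigl({\bf x}(\varphi_1)-{\bf c}\bigr)$ about the fixed point ${\bf c}$. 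One checks at once that $\rho(\varphi_1)={\bf x}(\varphi_1)$ and $\dot\rho(\varphi_1)=\Q\bigl({\bf x}(\varphi_1)-{\bf c}\bigr)=\dot{\bf x}(\varphi_1)$; hence at the instant $\varphi_1$ the two motions have the same position and the same velocity, that is, the motion is approximated up to the first derivative by the instantaneous rotation about ${\bf c}={\bf x}_0(\varphi_1)$, which is the instantaneous pole. This proves the statement.

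I do not expect a genuine obstacle: the entire content is the formal manipulation of the matrix relations \eqref{eq:onQ}--\eqref{eq:onR}. Two points merit a word of care. First, ``approximation up to the first derivative'' must be read as equality of velocity vector fields at a fixed instant rather than as closeness of trajectories, which is precisely what the displayed identity $\dot{\bf x}=\Q({\bf x}-{\bf x}_0)$ delivers. Second, the word ``generally'' in the statement refers to excluding those instants at which the motion is momentarily a pure translation (equivalently, at which the pole escapes to infinity); under the standing non-translatory hypothesis the pole ${\bf x}_0$ is always a well-defined finite point and is always the unique point with vanishing velocity.
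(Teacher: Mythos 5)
Your proof is correct and takes essentially the same route as the paper, whose ``proof'' is the discussion immediately preceding the theorem statement: differentiate \eqref{movingequation} using $\dot{\R}=\Q\R$, solve $\dot{\bf x}={\bf 0}$ uniquely for ${\bf u}_0=\Q\R^{-1}\dot{\bf p}$, pass to ${\bf x}_0={\bf p}+\Q\dot{\bf p}$, and derive $\dot{\bf x}=\Q({\bf x}-{\bf x}_0)$. Your only addition is the explicit comparison rotation $\rho$ about ${\bf x}_0(\varphi_1)$, which merely formalizes the paper's concluding assertion that the moving system ``in the given moment is a rotation about the center ${\bf x}_0$''.
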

This theorem leads to an interesting class of curves in the Euclidean plane.
\begin{defi}\label{def:roulette}
Given a curve $\gamma'$ associated with a plane $\Sigma'$ which is moving so that the curve rolls, without friction, along a given curve $\gamma $ associated with a fixed plane $\Sigma$ and occupying the same space. Then a point $P$ attached to $\Sigma'$ describes a curve in $\Sigma $ called a \emph{roulette}.
\end{defi}

Based on this rolling process we can rewrite the definition of the motion of rigid systems. Observe that every planar motion implies the motion of all points of the moving plane with respect to the fixed one. These orbits are said to be roulettes (see Definition 5). Thus, for the studied motion we consider two curves, also called \emph{polodes}, and a suitable rolling process to determine the motion of a singular point. For this purpose a method is needed to determine the fixed position of the point $P$ with respect to the moving polode. A usual method is to give a line through the point $P$ which intersects the moving polode in the point $Q$ and fixes the distance of $P$ and $Q$ and the angle of the line $PQ$ with the tangent line $t_Q$ of the moving polode at $Q$. Hence the choice of $Q$ on the moving polode is arbitrary. Fix $Q={\bf w}(0)$ and $P={\bf x}(0)$.  The points of the roulette ${\bf w}(s)$ of $Q$ can be obtained by the composition of the following transformations: translate the point $\gamma'(s)$ into the origin, rotate the image of the point of $\gamma(0)$ about the origin by the angle $\varphi(s)=\left( \dot{\gamma}(s),\dot{\gamma'}(s)\right)\angle $, and translate the obtained point by $\gamma(s)$. Hence the roulette of $Q$ in the fixed system is given by
$$
{\bf w}(s)=\R(\varphi(s))(-\gamma'(s))+\gamma(s)=\gamma(s)-\R(\varphi(s))(\gamma'(s)).
$$
Since the roulette ${\bf x}(s)$ of the point $P$ can be described by the formula ${\bf x}(s)={\bf w}(s)+\R(\varphi(s)){\bf p}$, we get
\begin{equation}\label{equatofP}
{\bf x}(s)=\gamma(s)+\R(\varphi(s))\left({\bf p}-\gamma'(s)\right).
\end{equation}
This means that if we have two touching arcs $\gamma (s)$ and $\gamma'(s)$  of a plane $\Sigma $, and we associate to the second arc a moving plane $\Sigma '$ in which its position is fixed, then the rolling process of $\gamma'(s)$ on $\gamma(s)$ (locally) determines an orbit of every point of $\Sigma'$ in a unique way. In the Euclidean plane, (\ref{equatofP}) shows that in every moment with respect to varying $p$ we have an isometry. Hence the rolling process of the arcs determines a rigid motion of the plane $\Sigma'$. This representation is locally unique, since a rigid motion uniquely determines its polodes. Hence we have
\begin{theorem}\label{thm:conveuclideanmain}
If $\gamma,\gamma':[0,\beta]\rightarrow \mathbb{R}^2$ are two simple Jordan arcs with common touching point $\gamma(0)=\gamma'(0)$ such that $s$ is the arc-length parameter of both of them (considered from the points $\gamma(0), \gamma'(0)$ to the points $\gamma(s)$, $\gamma'(s)$, respectively), then for every $s\in [0,\beta]$ we have an isometry $\Phi_s$ sending the original position vector ${\bf p}$ into the instantaneously position $\Phi_s(p)$. If $\gamma$ and $\gamma'$  have, for all $s\in [0,\beta]$, unique tangents at their points $\gamma(s)$ and $\gamma'(s)$, respectively, then, for all $s\in[0,\beta]$, $\Phi_s$ is uniquely determined and can be described by the vector equation
$$
\Phi_s({\bf p})=\gamma(s)+\R(\left( \dot{\gamma}(s),\dot{\gamma'}(s)\right)\angle)\left({\bf p}-\gamma'(s)\right)\,.
$$
Here $\dot{\gamma}(s)$ and $\dot{\gamma'}(s)$ denote the unit tangent vectors at $\gamma(s)$ and $\gamma'(s)$, respectively, and $\R(\theta)$ is the rotation with the angle $\theta$. For fixed ${\bf p}$, the graph of the function $\Phi_{(\cdot)}({\bf p}):[0,\beta]\rightarrow \Sigma $ is said to be the \emph{roulette} of the point $P={\bf p}\in \Sigma$ for the rigid motion given by the system of isometries $\{\Phi_s:\, s\in[0,\beta]\}$.
\end{theorem}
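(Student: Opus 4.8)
The plan is to adopt the displayed formula as the \emph{definition} of the candidate family $\{\Phi_s\}_{s\in[0,\beta]}$ -- which is legitimate, since equation (\ref{equatofP}) shows that this is precisely the form a rolling motion is forced to take -- and then to prove three things: each $\Phi_s$ is a direct isometry, the family realizes the rolling of $\gamma'$ along $\gamma$, and, under the unique-tangent hypothesis, it is the only such family. For the first point: writing $\varphi(s):=(\dot\gamma(s),\dot\gamma'(s))\angle$, the map ${\bf p}\mapsto\gamma(s)+\R(\varphi(s))({\bf p}-\gamma'(s))$ factors as the translation ${\bf p}\mapsto{\bf p}-\gamma'(s)$, followed by the rotation $\R(\varphi(s))$ about the origin, followed by the translation ${\bf q}\mapsto{\bf q}+\gamma(s)$; since $\det\R(\varphi)=1$ and translations are orientation-preserving isometries, $\Phi_s$ is a direct isometry of $\mathbb{R}^2$. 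With $\Phi_s({\bf p})$ understood as the instantaneous position of $P$, this already yields the first assertion of the theorem, so the real content lies in the explicit form and in uniqueness.

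Next I would verify the two geometric constraints characterizing ``rolling without slipping''. Substituting ${\bf p}=\gamma'(s)$ into the formula gives $\Phi_s(\gamma'(s))=\gamma(s)+\R(\varphi(s)){\bf 0}=\gamma(s)$, so the moving contact point is carried onto the fixed contact point. Since $s$ is the arc-length parameter of \emph{both} curves, $\dot\gamma(s)$ and $\dot\gamma'(s)$ are unit vectors, and by the very definition of $\varphi(s)$ as the oriented angle carrying $\dot\gamma'(s)$ to $\dot\gamma(s)$, the linear part $\R(\varphi(s))$ of $\Phi_s$ satisfies $\R(\varphi(s))\dot\gamma'(s)=\dot\gamma(s)$; hence the images of the two arcs under $\Phi_s$ are tangent at the common point. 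Because both curves are parametrized by arc length, the arc of $\gamma'$ from parameter $0$ to $s$ and the arc of $\gamma$ from $0$ to $s$ have equal length -- exactly the no-slipping condition -- and this mirrors the identity $\dot{\bf u}_0=\dot{\bf x}_0$ for the two polodes obtained before Theorem \ref{thm:euclideanmain}. Continuity of $s\mapsto\Phi_s({\bf p})$, which is what makes the roulette a curve, follows at once from continuity of $\gamma$, $\gamma'$ and of $\varphi(\cdot)$.

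For uniqueness I would use that a direct isometry of the plane has three real degrees of freedom and is determined uniquely by the image of a single point together with the image of a tangent direction at that point. The rolling requirement forces $\Phi_s(\gamma'(s))=\gamma(s)$ together with $\R(\varphi(s))\dot\gamma'(s)=\dot\gamma(s)$, and the second datum is unambiguous for \emph{every} $s\in[0,\beta]$ precisely because $\gamma$ and $\gamma'$ are assumed to possess unique tangents everywhere; therefore $\Phi_s$ is pinned down for all $s$, and the unique solution is the stated formula. (At a parameter where a tangent fails to be unique the directional datum is ambiguous and $\Phi_s$ need not be determined, which is exactly why this hypothesis appears.) The graph of $s\mapsto\Phi_s({\bf p})$ is then the roulette of $P$ by definition.

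I expect the one delicate point to be the bookkeeping of orientation and sign: one must fix the convention so that $(\dot\gamma(s),\dot\gamma'(s))\angle$ denotes the signed angle for which $\R$ of it sends the \emph{moving} unit tangent $\dot\gamma'(s)$ to the \emph{fixed} one $\dot\gamma(s)$ (and not the reverse), and check that this is compatible with the counterclockwise choices made in deriving (\ref{equatofP}); once the convention is fixed, everything else is a direct transcription of the kinematic computation preceding the theorem.
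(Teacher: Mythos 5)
Your proposal is correct and follows essentially the same route as the paper, which establishes the theorem via the translate--rotate--translate derivation of equation (\ref{equatofP}) given immediately before the statement and then notes that each $\Phi_s$ is an isometry and that the representation is locally unique. Your uniqueness argument (a direct isometry is pinned down by the image of the contact point together with the image of the unit tangent there, both of which the rolling condition prescribes once tangents are unique) is in fact more explicit than the paper's one-line appeal to the fact that a rigid motion uniquely determines its polodes, and your remark about fixing the sign convention for $(\dot{\gamma}(s),\dot{\gamma'}(s))\angle$ matches the convention implicit in the paper's derivation.
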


\section{Flexible motions of a Minkowski plane}

Our purpose now is to extend Theorem \ref{thm:conveuclideanmain} to Minkowski planes. For this purpose we defined already the motion group $\mathcal{M}_r$ of the Minkowski plane, which is a good analogoue of a motion group of the Euclidean plane. Clearly, we have to omit the condition that a motion is an isometry, due to the smallness of the actual isometry group in a Minkowski plane. Of course, any motion group $\mathcal{M}_r$ contains all the translations. On the other hand, it is possible that the image of a metrical segment under a general rotation is not a metrical segment. Hence the concept of Euclidean rigid motions has to be redefined.

\subsection{Notions and background}

We concentrate on Theorem \ref{thm:conveuclideanmain} for the Euclidean planar motions, and we will consider from now on that the motion group $\mathcal{M}_r$ is the motion group associated with the group of general rotations $\mathcal{R}(\partial B,\mu_l,o)$. In other words, we will consider the rotations by arc-length of the unit circle with respect to the origin.

\begin{defi}\label{rolling}
The rectifiable Jordan curve $\gamma'(s)$ \emph{rolls without slipping} on the rectifiable Jordan curve $\gamma (s) $ if in every moment $s\in[0,\beta]$ the two curves touch each other, and the respective arc-lengths calculated from their common point $\gamma(0)=\gamma'(0)$ to the other one $\gamma(s)=\gamma'(s)$ are equal to each other and also to the common parameter $s$.
\end{defi}

Having the rolling procedure and the motion group $\mathcal{M}_r$, we can define the continuous (but not rigid) motions of a Minkowski plane. Assume that in this section any considered curve is a rectifiable Jordan curve, with unique tangent at all of its points, respectively. We denote the unit tangent vector of $\gamma$ at its point $\gamma(s)$ by $\dot{\gamma}(s)$. (Since $s$ means the arc-length parameter, this notation corresponds to the usual Euclidean notation based on the arc-length derivative of the position vector.)

\begin{defi}
If the rectifiable Jordan curve $\gamma'(s)$ rolls, without slipping, on the rectifiable Jordan curve $\gamma(s)$, then we define \emph{the flexible motion corresponding to the rolling curves} $\gamma$ and $\gamma'$ as the following set of mappings:
\begin{equation}\label{motionequality}
\{\Phi_s({\bf p})=\gamma(s)+\R(\varphi_s)\left({\bf p}-\gamma'(s)\right):\, s\in[0,\beta]\},
\end{equation}
where $R(\varphi_s)\in \mathcal{R}(\partial B,\mu_l,o)$ denotes the general rotation which maps the (oriented) direction $\dot{\gamma}(s)$ to the (also oriented) direction $\dot{\gamma}'(s)$. A curve given by the graph of a fixed point ${\bf p}=P$ is called the \emph{roulette} of $P$.
\end{defi}

We can provide a simple illustrative example here: let $\gamma(s)$ be an arc-length parameterization of $\partial B$ starting at an arbitrary $p \in \partial B$. For any natural number $n \geq 2$ one can set $\gamma'(s) = \frac{n-1}{n}p + \frac{1}{n}\gamma(ns)$. Then, $\gamma'(s)$ is an inner circle which rolls without slipping on the unit circle $\gamma(s)$, and each of its points clearly describe a curve with $n$ cusps (see Figure \ref{fighypocycloid}). This can be regarded as an analogue to the standard hypocycloids of the Euclidean plane.

\begin{figure}[h]
\centering
\includegraphics{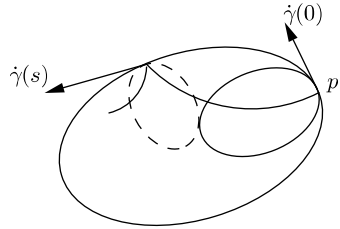}
\caption{A Minkowski hypocicloid}
\label{fighypocycloid}
\end{figure}

The vector
$$
\frac{\partial{\R}(\varphi)}{\partial \varphi}(\bf{x})=\lim\limits_{\varepsilon\rightarrow 0}\frac{\R(\varphi+\varepsilon)(\bf x)-\R(\varphi)(\bf x)}{\varepsilon}
$$
is the tangent vector of $|\bf x|\partial B$ at the point $\bf x$. This means that $\frac{\partial{\R}(\varphi)}{\partial \varphi}(\bf{x})$ is Birkhoff normal to the vector ${\R}(\varphi)(\bf{x})$. (For relations between semi-inner products and Birkhoff orthogonality, see, e.g., \cite{gho 1} or \cite{gho 2}.) Denote by $\Q$ that mapping which sends the vectors to their Birkhoff normals with the same norm, and by $\Q^{-1}$ the mapping which sends the vectors to their Birkhoff transversals with the same lengths. (Note that Birkhoff orthogonality is not a symmetric relation; see, e.g., \cite{martini-swanepoel 1} or \cite{mcshane}. So, in general, if $x$ is Birkhoff normal to $y$, then $y$ not to $x$. However, we have a possibility to ``reverse'' the formulation "$x$ is Birkhoff normal to $y$". We say in this case that $y$ is transversal to $x$.) Since the tangent vector of the roulette of $P$ at the point with parameter $s$ is
$$
\dot{\Phi_s}({\bf p})=\dot{\gamma}(s)+\Q({\R}(\varphi(s))({\bf p}-\gamma'(s))\dot{\varphi}(s) - R(\varphi_s)\dot{\gamma}'(s) = \Q({\R}(\varphi(s))({\bf p}-\gamma'(s))\dot{\varphi}(s) ,
$$
we get, with the semi-inner product defined by the Minkowski norm, that
$$
\left[\dot{\Phi_s}({\bf p}),\Phi_s({\bf p})-\gamma(s) \right]=0.
$$
Hence we obtain
\begin{statement}\label{st:instpole}
The velocity vector of the flexible motion of a point $\Phi_s({\bf p})$ of the roulette in a moment $s$ is Birkhoff normal to that vector $\Phi_s({\bf p})-\gamma(s)$ which shows from the point to the instantaneous pole of the motion.
\end{statement}

From Statement \ref{st:instpole} we can see that our definition yields the same kinematics in the Minkowski plane as given by usual motions of rigid systems in the Euclidean plane.

\subsection{Curvature and the Euler-Savary equations (introducing remarks)}

We will prove now the so-called \emph{Euler-Savary equations} (see \cite{rattan}) for normed planes. In space-time (or in the Minkowski plane with indefinite scalar product) this was investigated by Ikawa \cite{ikawa}. He defined roulettes and proved the Euler-Savary equations for normed planes, with respect to this semi-Riemannian geometry of constant curvature. Because of the rich isometry group of this plane, the validity of these results is not so surprising as in our case.

In this section we have to assume second order differentiability of the unit circle, and we have to introduce the concepts of curvature and curvature radius of a curve, respectively. Fortunately, in Minkowski planes several such concepts are known. Curvatures for curves in Finsler spaces were introduced for dimension $n=2$ by
Underhill \cite{underhill} and Landsberg \cite{landsberg}. For general $n$ they were introduced by Finsler \cite{finsler,finsler 1}. The definitions coincide for $n=2$. The underlying idea of these definitions is this: If $\gamma(s)$ is a curve with tangent $t$ at a given point $q$, then the line parallel to this tangent through the origin intersects the unit circle in a point $q'$ (in fact, in a pair of points, but it will not matter which point is chosen).
There is exactly one ellipsoid with the origin as center through $q'$ which has at $q'$ the same second differential as the unit circle. This ellipsoid determines a Euclidean metric $E(q)$. Finsler defines the curvatures of $\gamma(s)$ at $q$ as the curvatures at $q$
of $\gamma(s)$ as a curve in $E(q)$. Obviously, $E(q)$ exists only if the unit circle has a second
differential at $q '$ and the indicatrix is a non-degenerate ellipse. Actually, this idea is significant only if $C$ is of class $C^2$ and has positive Gauss curvature. Thus $\gamma(s)$ may not even have a curvature when it is analytic.

There exists another definition of curvature for curves in general spaces
which is due to Menger \cite{menger} (for modifications of this concept see \cite{haantjes}).
Haantjes' curvature coincides with that of Finsler. Hence Haantjes' main result in \cite{haantjes} means that, in Minkowski spaces, Menger's definition
coincides  with Finsler's definition.

\subsection{Busemann curvature and the general sine function of Busemann}

In \cite{busemann 2}, Busemann gave another concept of curvature. Before discussing it, we will define Busemann's sine function $\mathrm{sm}:\mathcal{L}\times \mathcal{L}\rightarrow \mathbb{R}$ from the pairs of lines to the field of reals. If $a,b\in \mathcal{L}$ and $s_a$, $s_b$ are two segments on these lines, respectively, then we can define the parallelogram $\pi(s_a,s_b)$ that is spanned by $s_a$ and $s_b$. If we write  $\mathrm{area}(\pi(s_a,s_b))$ for the Busemann area of $\pi(s_a,s_b)$ and take into consideration the Minkowski lengths $|s_a|$, $|s_b|$ of $s_a$ and $s_b$, then the \emph{Minkowski sine function of Busemann} can be defined as follows:
\begin{equation}\label{def:sm}
\mathrm{sm}(a,b):=\frac{\mathrm{area}(\pi(s_a,s_b))}{\|s_a\|\|s_b\|}\,.
\end{equation}
From the definitions of Minkowski length and Minkowski area it follows that $sm(a,b)$ is not depending on the segments $s_a$ and $s_b$. Thus, it depends only on the lines $a$, $b$.

In $n$-dimensional Minkowski space let $\gamma(s)$ be a curve which is, in the Euclidean sense, of class $C^r$ and parametrized by the Minkowskian arc-length $s$. Let $\gamma(s_i)$, $i=0,1,\ldots,n$, be $n+1$
points on $\gamma (s)$. Let $T_r$ denote the $r$-dimensional Minkowski volume of the $r$-dimensional simplex that is spanned by the points $\gamma(s_i)$,  $i=0,1\ldots r$. Then we define the \emph{$(r-1)$-{th} curvature} $\chi_{r-1}$ of the curve $\gamma $ in its point $\gamma(s)$ by the limit
\begin{equation}\label{def:curveture}
\chi_{r-1}(s)=\frac{r^2}{r-1}\lim\limits_{s_i\rightarrow s}\frac{1}{\|\gamma(s_r)-\gamma(s_0)\|}\frac{T_rT_{r-2}}{T_{r-1}T^\star_{r-1}}
\end{equation}
(see \cite{busemann 2}), where $T^\star_{r-1}$ denotes the volume of the $(r-1)$-dimensional simplex spanned by the points $\gamma(s_i)$, $i=1,\ldots r$. Let $D_r$ be the following quantity:
$$
D_r(s)=r!\prod\limits_{i=1}^{r}i!\lim\limits_{s_i\rightarrow s}\frac{T_r}{\prod\limits_{i<j}\|\gamma(s_i)-\gamma(s_j)\|}\,.
$$
Then for $D_{r-2}(s)\ne 0$ we get the following form of the curvature function:
$$
\chi_{r-1}(s)=\frac{D_r(s)D_{r-2}(s)}{D_{r-1}^2(s)}.
$$
This formula can be rewritten by the concept of the general sine function of two flats of the $n$-dimensional Minkowski space, but we need only the case of dimension $2$. Hence, using that $D_0(s)=1$, the curvature is
\begin{equation}\label{def:curvsm}
\chi_\gamma(s):=\chi_1(s)=\frac{D_2(s)}{D_1^2(s)}=2\lim\limits_{s_0,s_1,s_2\rightarrow s}\frac{\mathrm{sm}(g(\gamma(s_0),\gamma(s_1)),g(\gamma(s_1),\gamma(s_2)))}{\|\gamma(s_2)-\gamma(s_0)\|},
\end{equation}
where $g(x,y)$ denotes the line through $x$ and $y$.

There is a nice connection between the concepts of curvature given by  Finsler and Busemann. In a Minkowski plane, the Finsler curvature $\chi^f$ and the curvature $\chi$ of Busemann of a curve $\gamma (s) $
at a point $P$, with position vector $\overline{p}$, are related by
$$
(\chi^f(P))^2=\frac{\chi^2(P)}{\chi_T(\overline{p})}\,,
$$
where $\chi_T(\overline{p})$ is the curvature of the isoperimetrix (see \cite{busemann 1}) at a point $\overline{p}$
(the tangent of the isoperimetrix has to be parallel to the tangent of $\gamma(s)$ at $p$).

A curve $\gamma(s)$ having curvature in Euclidean sense has also curvature in the sense of Busemann. These two curvatures can be compared. For this purpose we have to use the $\sigma$-function introduced by Busemann. Let $V_r$ be an $r$-flat of a Minkowski space of dimension $n$. If $U(V_r)$ is the set in which the $r$-flat, parallel to $V_r$ and passing
through the origin, intersects the solid Minkowskian unit sphere, then we define $\sigma(V_r)$ as the ratio of the $r$-dimensional volume of the $r$-dimensional unit ball and the Euclidean volume of $U(V_r)$. Observe that if $\gamma(s)$ is a $C^1$ curve with tangent line $t_P$ and velocity vector $\dot{\gamma}(s)$ at the point $P=\gamma(s)$, then by the definition of Minkowski length we have
\begin{equation}\label{comp:tang}
\|\dot{\gamma}(s)\|=\sigma(t_P)\|\dot{\gamma}(s)\|_E,
\end{equation}
where $\|\cdot\|_E$ means the Euclidean norm. Busemann \cite{busemann 2} proved that if $\chi_E(P)$ denotes the Euclidean curvature of $\gamma(s)$ at the point $P$, $t_P$ is written for the tangent line of $\gamma(s)$ at $P$, and $T_P$ is the osculating plane of the curve at $P$, then
\begin{equation}\label{comp:curv}
\chi(P)=\frac{\sigma(T_P)}{\sigma^3(t_P)}\chi^E(P).
\end{equation}

\subsection{The generalized Euler-Savary equations and their combination}

We use these formulas to establish a close analogue to the Euler-Savary theorem on rigid motions in the Euclidean plane. First of all, we consider two curves $\gamma$ and $\gamma'$. Hence we have to use a  suitable lower subscript for the curvature function. We also have the concept of \emph{curvature radius} $r_{\gamma}$ which is, as well-known, the reciprocal value of the curvature at the given point $K=\gamma(s)$. With these notions we are able to formulate
\begin{theorem}[Second Euler-Savary equation]\label{thm:Euler-Savary}
If the unit circle of the Minkowski plane is two times continuously differentiable, then the following equality holds:
\begin{equation}\label{equ:Euler-Savary}
\chi_{\gamma}-\chi_{\gamma'}=\frac{1}{r_{\gamma}}-\frac{1}{r_{\gamma'}}=\frac{\sigma(T_K)}{\sigma^2(t_K)}\frac{1}{\alpha_K}\,.
\end{equation}
Here $r_{\gamma}$ is the curvature radius of the fixed polode at its point $K=\gamma_s$, $r_{\gamma'}$ is the curvature radius of the moving polode at its point $K=\gamma'_s$, and $\alpha_K$ is the length of the common velocity vector of the fixed and moving polodes at the moment $s$ and at the instantaneous pole $K=\gamma(s)=\gamma'(s)$.
\end{theorem}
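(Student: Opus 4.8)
The first equality in \eqref{equ:Euler-Savary} is merely the definition of the curvature radius, so the whole content sits in the second one. The plan is to exploit the special shape of the general rotation $\R(\varphi_s)$ — it is the ``direction twist'' that leaves the radial coordinate fixed and advances the Minkowskian arc-length coordinate of $\partial B$ — in order to reduce $\chi_\gamma-\chi_{\gamma'}$ to the rate at which $\varphi_s$ increases along the motion, and only at the end to invoke the comparison formulas \eqref{comp:tang} and \eqref{comp:curv} so as to recover the $\sigma$-factors. The structural fact that makes this work is that, at the contact point $K$, the two polodes share the tangent line $t_K$ (rolling without slipping forces touching, cf.\ Definition \ref{rolling}) and, being plane curves, the osculating plane $T_K$; hence the factor $\sigma(T_K)/\sigma^3(t_K)$ of \eqref{comp:curv} is one and the same for $\gamma$ and for $\gamma'$.

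First I would parametrize both polodes by the common Minkowskian arc-length $s$, so that $K=\gamma(s)=\Phi_s(\gamma'(s))$; differentiating $\Phi_s$ at $\gamma'(s)$, as in the computation preceding Statement \ref{st:instpole}, shows that $\R(\varphi_s)$ identifies the two oriented unit tangents of the polodes at $K$. Writing $u(\psi)$ for the Minkowskian arc-length parametrization of $\partial B$, the map $\R(\varphi_s)$ acts as $\varrho\,u(\psi)\mapsto\varrho\,u(\psi+\delta_s)$ with $\delta_s=\tfrac{l(\partial B)}{2\pi}\varphi_s$; thus, setting $\dot\gamma(s)=u(\psi_\gamma(s))$ and $\dot\gamma'(s)=u(\psi_{\gamma'}(s))$, the touching and rolling conditions amount to $\psi_\gamma(s)=\psi_{\gamma'}(s)+\delta_s$, and differentiation gives $\dot\psi_\gamma-\dot\psi_{\gamma'}=\dot\delta_s$. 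This is the Minkowskian substitute for the Euclidean identity ``tangent angle of the fixed polode $=$ tangent angle of the moving polode $+$ frame angle'', differentiated, which underlies Theorem \ref{thm:euclideanmain}.

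Next I would compute the two Busemann curvatures. A curve parametrized by Minkowskian arc-length has $\ddot\gamma(s)=u'(\psi_\gamma(s))\,\dot\psi_\gamma(s)$, so \eqref{def:curvsm} together with the fact that the Busemann area is a fixed multiple of the Euclidean area gives $\chi_\gamma=F(\psi_\gamma)\,\dot\psi_\gamma$ for a positive quantity $F$ depending only on the tangent direction, which by \eqref{comp:tang} and \eqref{comp:curv} carries exactly the $\sigma$-data attached to $t_K$ and $T_K$. The genuinely new point is the moving polode in its rolled position: since $\Phi_s$ is not an isometry one cannot invoke invariance of curvature, and instead one expands $\R(\varphi_s)\bigl(\gamma'(t)-\gamma'(s)\bigr)$ to second order about $t=s$, using that $\R(\varphi_s)$ fixes the radial coordinate and only shifts the angular one, to find that the rolled moving polode has Busemann curvature $F(\psi_\gamma)\,\dot\psi_{\gamma'}$ — the \emph{same} direction-factor $F(\psi_\gamma)$ as for $\gamma$ (its tangent at $K$ being $\dot\gamma(s)$), but carrying $\gamma'$'s own rate $\dot\psi_{\gamma'}$. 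Subtracting yields $\chi_\gamma-\chi_{\gamma'}=F(\psi_\gamma)\,(\dot\psi_\gamma-\dot\psi_{\gamma'})=F(\psi_\gamma)\,\dot\delta_s$; then expanding $F(\psi_\gamma)$ through \eqref{comp:tang} and \eqref{comp:curv} and recognizing $\dot\delta_s$ as the instantaneous rate of rotation reorganizes this product into $\dfrac{\sigma(T_K)}{\sigma^2(t_K)}\cdot\dfrac{1}{\alpha_K}$, where $\alpha_K$ is the length of the common velocity vector of the polodes at $K$ measured with respect to the rotation parameter — the Minkowskian counterpart of the Euclidean relation $\kappa_\gamma-\kappa_{\gamma'}=d\varphi/ds=1/a$, $a$ being the length of the pole velocity per unit rotation angle.

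The main obstacle is the combination of the curvature-distortion computation for the non-isometric map $\R(\varphi_s)$ with the final matching of $\sigma$-factors. One must be careful that $\chi_{\gamma'}$ (equivalently $r_{\gamma'}$) in \eqref{equ:Euler-Savary} refers to the moving polode in its rolled position, so that it genuinely shares the tangent direction, and hence the $\sigma(t_K)$-factor, with $\gamma$; and one has to keep track of the factor $\tfrac12$ coming from the chord $\gamma'(t)-\gamma'(s)$ turning at half the rate of the tangent (it cancels in the end). The remaining issues — the orientation signs distinguishing internal from external rolling, and the treatment of non-smooth points of the polodes — are routine, and can be handled exactly as in the smooth/non-smooth dichotomy already used in the paper.
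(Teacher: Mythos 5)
Your proposal reaches the right conclusion and rests on the same two structural facts as the paper's argument --- that the polodes share the tangent line $t_K$ (and, in the plane, the osculating plane $T_K$) at the contact point, and that the comparison formulas \eqref{comp:tang} and \eqref{comp:curv} convert Minkowskian data into Euclidean data by direction-dependent $\sigma$-factors --- but the route is genuinely different. The paper's proof is a three-line transfer: since both polodes have the same tangent at $K$, \eqref{comp:curv} gives $\chi_{\gamma}-\chi_{\gamma'}=\frac{\sigma(T_K)}{\sigma^3(t_K)}\left(\chi^E_{\gamma}-\chi^E_{\gamma'}\right)$ with one and the same conversion factor for both curves; the classical Euclidean Euler--Savary equation supplies $\chi^E_{\gamma}-\chi^E_{\gamma'}=1/\alpha^E_K$; and \eqref{comp:tang} converts $\alpha^E_K=\alpha_K/\sigma(t_K)$, which is exactly what drops $\sigma^3$ to $\sigma^2$. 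You instead re-derive the kinematic identity intrinsically, tracking the Minkowskian arc-length coordinate $\psi$ of the tangent direction on $\partial B$ and using $\dot\psi_{\gamma}-\dot\psi_{\gamma'}=\dot\delta_s$. This is more self-contained (it does not quote the Euclidean theorem as a black box), but it postpones to your final ``reorganization'' step precisely the computation that the paper discharges by citing the Euclidean equation, and that step is asserted rather than carried out; once you write out the identification of $F(\psi_{\gamma})\,\dot\delta_s$ with $\frac{\sigma(T_K)}{\sigma^2(t_K)}\frac{1}{\alpha_K}$ (which requires fixing $\alpha_K$ as the Minkowskian length of the same velocity vector whose Euclidean length enters the Euclidean relation), you will have reproduced the paper's steps anyway.

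One concrete caution: the step you single out as ``the genuinely new point'' --- expanding $\R(\varphi_s)\bigl(\gamma'(t)-\gamma'(s)\bigr)$ to second order about $t=s$ --- is both delicate and avoidable. The contact point is the \emph{center} of the general rotation $\R(\varphi_s)$, and a general rotation is positively homogeneous but not differentiable at its center (it is linear only in the Euclidean case), so a second-order Taylor expansion there does not exist in the usual sense; the curvature of the image of a curve through the center of such a map needs a separate argument. The theorem is a pointwise statement at the moment of contact, so one may simply take $s=0$, where $\R(\varphi_0)=\mathrm{id}$ and the moving polode is already in touching position sharing the tangent $t_K$ with $\gamma$; then $\chi_{\gamma'}$ is just the Busemann curvature of $\gamma'$ itself at $K$, the distortion computation disappears, and your argument (or the paper's) goes through.
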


\begin{proof}
Using (\ref{comp:tang}), (\ref{comp:curv}) and the Euclidean version of the Euler-Savary equation, we get
$$
\chi_{\gamma}-\chi_{\gamma'}=\frac{\sigma(T_K)}{\sigma^3(t_K)}\left(\chi^E_{\gamma}-\chi^E_{\gamma'}\right)=\frac{\sigma(T_K)}{\sigma^3(t_K)}\frac{1}{\alpha^E_K}=\frac{\sigma(T_K)}{\sigma^2(t_K)}\frac{1}{\alpha_K},
$$
as we stated.
\end{proof}

To prove an analogue of the first Euler-Savary equation, we need a deeper investigation of the Busemann curvature.
Let $t_K$ be the common tangent of the polodes at their common point $K$, which is the $x$-axis of a Euclidean orthogonal coordinate system $(x,y)$. We denote by $O,O'$ the curvature centers of the curves $\gamma(s)$ and $\gamma'(s)$, respectively. Then $O$ and $O'$ coincide with the line $y$ and $\chi^E_\gamma(K)=1/\|KO\|_E$, $\chi^E_{\gamma'}(K)=1/\|KO'\|_E$. Denote by $P$ any point of the moving plane corresponding to the curve $\gamma'$ with the vector ${\bf p}=\overrightarrow{KP}$. As we saw in Statement \ref{st:instpole}, the line $n_P$ of the points $K,P$ contains the Minkowskian curvature center of the roulette of $P$, since it is Birkhoff normal to the tangent $t_P$ at $P$. Denote this point by $P'$. We have at $\gamma(0)=\gamma'(0)=K$ that $\R(\varphi(0))=\mathrm{id}$, and $\dot{\gamma}(0)={\bf v}_K$, where ${\bf v}_K$ is the common (Minkowskian) velocity vector at $K$.
Hence we have the equality
$$
{\bf v}_P:=\left.\frac{\partial (\Phi_s({\bf p}))}{\partial s}\right|_0=\left.\Q({\R}(\varphi(s))({\bf p}-\gamma'(s)))\dot{\varphi}(s)\right|_0=\Q(\overrightarrow{KP})\dot{\varphi}_0.
$$
Thus, the acceleration vector $a_P$ is
$$
{\bf a}_P=\left.\frac{\partial {\bf v}_P}{\partial s}\right|_0=\lim\limits_{\varepsilon\rightarrow 0}\frac{\Q({\R}(\varphi(\epsilon)))({\bf p}-\gamma'(\epsilon))\dot{\varphi}(\epsilon)-\Q({\R}(\varphi(0))({\bf p}-\gamma'(0)))\dot{\varphi}(0)}{\varepsilon}+\Q(\overrightarrow{KP})\ddot{\varphi}(0)=
$$
$$
=\dot{\varphi}(0)\left(\lim\limits_{\varepsilon\rightarrow 0}\frac{\Q({\R}(\varphi(\epsilon)))({\bf p}-\gamma'(\epsilon))-\Q({\bf p}-\gamma'(\epsilon))}{\varepsilon}+\lim\limits_{\varepsilon\rightarrow 0}\frac{\Q({\bf p}-\gamma'(\epsilon))-\Q({\bf p}-\gamma'(0))}{\varepsilon}\right)+
$$
$$
+\Q(\overrightarrow{KP})\ddot{\varphi}(0).
$$
Observe that if $\Q$ would be an additive function and we could change it with the limit process, then the first term in the bracket could be simplified to the quantity
$\Q\Q(\overrightarrow{KP})\dot{\varphi}(0)$ and the second one is nothing else than the velocity vector of the moving polode at zero. (In our case it is also the velocity vector of the fixed polode.)
In general this is not so, since the additivity of the operation $\Q$ implies that the space is Euclidean with a standard inner product. Thus, for further investigations we need a quantity which measures the difference between  the given limits and the optimal values (attended by the case of inner product planes). This motivates the following lemma.
\begin{lemma}\label{lem:accel} Assume that $\gamma(s)$ is a curve of $C^1$ type parametrized by its arc-length. If $\textbf{a},\textbf{b},\textbf{c}\in \gamma(s)$ and $t_{\textbf{c}}$ denotes the tangent of the curve $\gamma(s)$ at its point $\textbf{c}$, then we have
\begin{equation}\label{derQ}
\lim\limits_{\textbf{a},\textbf{b}\rightarrow \textbf{c}}\frac{\Q(\textbf{b})-\Q(\textbf{a})}{\|\textbf{b}-\textbf{a}\|}=\frac{1}{\sigma(t_\textbf{c})}\Q^2(\textbf{c}).
\end{equation}
\end{lemma}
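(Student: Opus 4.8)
The plan is to turn the asserted limit into an ordinary derivative along $\gamma$, and then to evaluate that derivative by combining the homogeneity of $\Q$ with the Busemann $\sigma$-function. First I would note that, since $\gamma$ is parametrized by arc length, for $\mathbf{a}=\gamma(s_1)$, $\mathbf{b}=\gamma(s_2)$ with $s_1,s_2$ tending to the parameter $s_0$ of $\mathbf{c}$ one has $\|\mathbf{b}-\mathbf{a}\|=|s_2-s_1|(1+o(1))$, so that
\[
\lim_{\mathbf{a},\mathbf{b}\to\mathbf{c}}\frac{\Q(\mathbf{b})-\Q(\mathbf{a})}{\|\mathbf{b}-\mathbf{a}\|}=\left.\frac{d}{ds}\,\Q(\gamma(s))\right|_{s_0}.
\]
The right-hand side is meaningful: since the unit circle is twice continuously differentiable its field of Minkowskian unit tangents is of class $C^{1}$, and because $\Q$ is positively homogeneous of degree one and coincides on $\partial B$ with this tangent field, $\Q$ is $C^{1}$ on $X\setminus\{O\}$; hence $\Q\circ\gamma$ is $C^{1}$. (If $\gamma$ is not smooth, one passes to the smooth case by a routine approximation, exactly as in the preceding lemma.)

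Next I would write $\gamma(s)=\rho(s)\mathbf{u}(s)$ with $\rho(s)=\|\gamma(s)\|$ and $\mathbf{u}(s)=\gamma(s)^{o}\in\partial B$, so that $\Q(\gamma(s))=\rho(s)\,\Q(\mathbf{u}(s))$, where $\Q(\mathbf{u}(s))$ is precisely $\left.\tfrac{\partial}{\partial\varphi}\R(\varphi)(\mathbf{u}(s))\right|_{0}$, the Minkowskian unit tangent of $\partial B$ at $\mathbf{u}(s)$. Differentiating this product splits the derivative into a contribution of the varying radius $\rho$ and a contribution of the motion of $\mathbf{u}(s)$ along $\partial B$; the geometric heart of the matter is that differentiating the unit-tangent field of $\partial B$ once more along $\partial B$ reproduces, after a second application of the tangent map, the vector $\Q^{2}$ — $\Q$ acts on the space of directions like a quarter rotation. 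Collecting terms and evaluating at $s_0$, the whole expression reduces to a scalar multiple of $\Q^{2}(\mathbf{c})=\|\mathbf{c}\|\,\Q^{2}(\mathbf{c}^{o})$, and I would identify that scalar as $1/\sigma(t_{\mathbf{c}})$: by (\ref{comp:tang}) the change from the Minkowskian arc-length parameter to the Euclidean one along the relevant curve on $\partial B$ rescales the speed exactly by the factor $1/\sigma(t_{\mathbf{c}})$. This yields (\ref{derQ}).

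The hard part will be the differentiation step itself. As stressed just before the statement, $\Q$ is \emph{not} additive — additivity would already force the norm to come from an inner product — so one cannot interchange $\Q$ with the difference quotient and simply read off the answer; the gap between the naive ``inner-product value'' and the true derivative has to be controlled. I would do this by working in the Euclidean background metric $E(\mathbf{c})$ defined by the osculating ellipse of $\partial B$ at $\mathbf{c}^{o}$ (the metric used above in the discussion of Finsler curvature): in $E(\mathbf{c})$ the tangent field of $\partial B$ agrees to first order with an honest rotation by $\pi/2$, which makes $\frac{d}{ds}\Q(\gamma(s))|_{s_0}$ explicitly computable, while the distortion between $E(\mathbf{c})$ and the ambient Euclidean structure in the direction $t_{\mathbf{c}}$ is measured precisely by $\sigma(t_{\mathbf{c}})$ — which is exactly where the factor $1/\sigma(t_{\mathbf{c}})$ in (\ref{derQ}) originates.
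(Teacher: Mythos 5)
Your reduction of the two-sided limit to the derivative $\frac{d}{ds}\Q(\gamma(s))$ at $s_0$ is unobjectionable, but the proposal breaks down at the decisive step, namely the evaluation of that derivative. Writing $\gamma(s)=\rho(s)\mathbf{u}(s)$ with $\rho(s)=\|\gamma(s)\|$ and using the homogeneity of $\Q$ gives
$$
\frac{d}{ds}\Q(\gamma(s))=\rho'(s)\,\Q(\mathbf{u}(s))+\rho(s)\,\frac{d}{ds}\Q(\mathbf{u}(s)),
$$
and the first summand is parallel to $\Q(\mathbf{c})$, which is linearly independent of $\Q^{2}(\mathbf{c})$. Hence ``collecting terms'' cannot produce a scalar multiple of $\Q^{2}(\mathbf{c})$ unless $\rho'(s_0)=0$, i.e.\ unless $\gamma$ is tangent at $\mathbf{c}$ to the Minkowski circle $\|\mathbf{c}\|\,\partial B$. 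You neither assume nor establish this, and for a generic $C^{1}$ curve it fails; your own decomposition, followed honestly, leaves a nonzero $\Q(\mathbf{c})$-component unaccounted for. (In the paper's application the curve is an orbit $\varphi\mapsto\R(\varphi)(\mathbf{x})$, for which the tangency does hold, but the lemma and your argument are stated for arbitrary arc-length-parametrized curves.) The second substantive claim --- that differentiating the unit-tangent field of $\partial B$ along $\partial B$ reproduces $\Q^{2}$ with the precise factor $1/\sigma(t_{\mathbf{c}})$ --- is essentially the content of the lemma itself; you defer it to a computation in the osculating metric $E(\mathbf{c})$ that is never carried out, and it is not evident that it would yield that factor, since $\sigma(t_{\mathbf{c}})$ measures the Minkowski--Euclidean length distortion along the chord direction $t_{\mathbf{c}}$, which is a datum of the ambient norm and not of the osculating ellipse at $\mathbf{c}^{o}$.

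For comparison, the paper's proof never differentiates $\Q\circ\gamma$ by a product rule. It works on the difference quotient directly: the direction of the limit is read off from the fact that the secant lines $g(\Q(\mathbf{a}),\Q(\mathbf{b}))$ converge to the tangent line of the image curve $\Q(\gamma(s))$ at $\Q(\mathbf{c})$, which it identifies with the direction of $\Q^{2}(\mathbf{c})$; the magnitude is obtained by rewriting both chord lengths through $\|v\|=\sigma(\ell_{v})\|v\|_{E}$ (the chord version of (\ref{comp:tang})) and invoking the continuity of $\sigma$, so that $1/\sigma(t_{\mathbf{c}})$ appears as the limit of $1/\sigma(g(\mathbf{a},\mathbf{b}))$ while the numerator's factor $\sigma(t_{\Q(\mathbf{c})})$ is reabsorbed when $\|\Q^{2}(\mathbf{c})\|_{E}$ is converted back to the Minkowski norm. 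If you wish to salvage your route you must (i) justify, or impose, the tangency condition that kills the $\Q(\mathbf{c})$-term, and (ii) actually perform the computation of $\frac{d}{ds}\Q(\mathbf{u}(s))$, which is where the whole content of the statement lies.
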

\begin{proof}
By definition the line $g(\Q(a),\Q(b))$ tends to the tangent line $t_{\Q(c)}$ of the curve $\Q(\gamma(s))$ at its point $\Q(c)$. Since it is parallel to the vector $\Q\Q(c)$ of length $\|c\|$, we have to determine only the length of the limit vector of the left hand side. But we have
$$
\frac{\|\Q(\textbf{b})-\Q(\textbf{a})\|}{\|\textbf{b}-\textbf{a}\|}=\frac{\sigma(g(\Q(\textbf{a}),\Q(\textbf{b})))\|\Q(\textbf{b})-\Q(\textbf{a})\|_E}{\sigma(g(\textbf{a},\textbf{b}))\|\textbf{b}-\textbf{a}\|_E}\,,
$$
and hence, using the continuity of the function $\sigma$, we get
$$
\lim\limits_{a,b\rightarrow c}\frac{\Q(\textbf{b})-\Q(\textbf{a})}{\|\textbf{b}-\textbf{a}\|}=\frac{\sigma(t_{\Q(\textbf{c})})}{\sigma(t_\textbf{c})}\left\|\left.\dot{\Q(\gamma(s))}\right|_{\textbf{c}}\right\|_E=\frac{\sigma(t_{\Q(\textbf{c})})}
{\sigma(t_\textbf{c})}\|\Q(\Q(\textbf{c}))\|_E=\frac{\|\Q(\Q(\textbf{c}))\|}{\sigma(t_\textbf{c})},
$$
as we stated.
\end{proof}
By Lemma \ref{lem:accel} we get an expression for the acceleration vector above, namely
$$
{\bf a}_P = \dot{\varphi}^2(0)\left(\frac{1}{\sigma(t_{P})}\Q^2(\overrightarrow{KP})- \frac{1}{\sigma(t_{K})}\Q\left(\frac{{\bf v}_K}{\dot{\varphi}(0)}\right)\right)+ \Q(\overrightarrow{KP})\ddot{\varphi}(0),
$$
where ${\bf v}_K$ means the common velocity vector of the curves $\gamma(s)$, $\gamma'(s)$ at $K=\gamma(0)=\gamma'(0)$.
\begin{figure}
 \centering
   \includegraphics[]{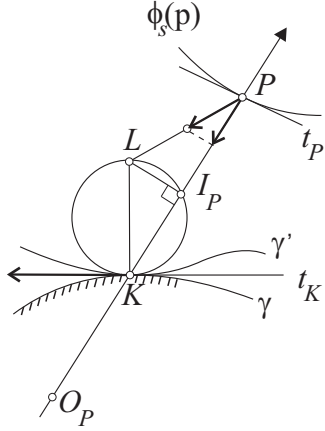}
 \caption{The point $L$}
\label{EulerSavaryfig}
\end{figure}
We now introduce a point $L$ (see Figure \ref{EulerSavaryfig}) such that
$$
\overrightarrow{LP}=-\left(\frac{1}{\sigma(t_{P})}\Q^2(\overrightarrow{KP})-\frac{1}{\sigma(t_{K})}\Q\left(\frac{{\bf v}_K}{\dot{\varphi}(0)}\right)\right)\,,
$$
hence the acceleration may be written as
\begin{equation}\label{acc}
{\bf a}_P=\ddot{\varphi}(0)\Q(\overrightarrow{KP})-\dot{\varphi}^2(0)\overrightarrow{LP}.
\end{equation}
Observe that $\Q(\overrightarrow{KP})$ is normal to the vector $\overrightarrow{KP}$, and that it has no component parallel to $\overrightarrow{KP}$. The vector $-\dot{\varphi}^2(0)\overrightarrow{LP}$ lies along $g(L,P)$ and is directed toward $L$, so its projection contributes to both components (one of them  parallel to $g(K,P)$, and the other one normal to it) of the acceleration vector. Hence a unique situation exists if $\overrightarrow{LP}$ is normal to $\overrightarrow{KP}$. In this case, the acceleration vector has no component parallel to $g(K,P)$ implying that the radius of curvature of its path is infinite.
\begin{defi}\label{def:inflectioncurve}
The locus of all points $P$  for which $\overrightarrow{LP}$ is normal to $\overrightarrow{KP}$ is the \emph{inflection curve} of the motion. The point $L$ is the \emph{inflection pole} of the motion.
\end{defi}
The inflection curve is the "Thales circle" of the segment $\overline{KL}$ with respect to Birkhoff orthogonality. We have to prove the following properties of it:
\begin{statement}
The inflection curve $\iota$ is a closed curve. It is starlike with respect to the point $K$ if the unit circle is smooth. However, in general it does not bound a convex domain.
Finally, if it is a Minkowski circle for all segments of the normed plane, then the plane is Euclidean.
\end{statement}

\begin{proof}
The first statement is trivial. To prove the starlike property, consider the notation of Fig. \ref{thalesianfig}.
\begin{figure}
  \centering
 \includegraphics[]{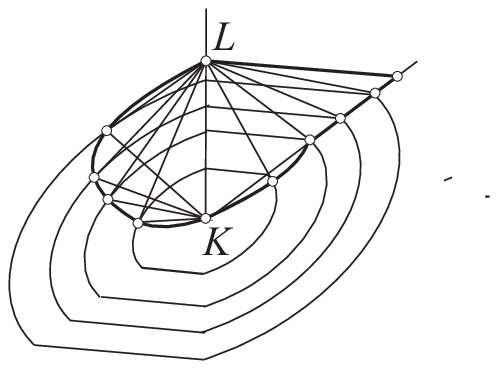}\\
 \caption{The curve of inflection}
\label{thalesianfig}
\end{figure}

First of all, observe that every half-line of the upper half-plane starting at $K$ intersects $\iota$ in a point or a segment. This follows from the fact that if for $Y'\ne Y''$, $Y',Y''\in\iota$ and $\overrightarrow{KY''}=t\overrightarrow{KY'}$, then  $[\overrightarrow{LY'},\overrightarrow{KY'}]=0$ and $t[\overrightarrow{LY''},\overrightarrow{KY'}]=[\overrightarrow{LY''},\overrightarrow{KY''}]=0$. If $\tau\in [0,1]$ is arbitrary, then we have that with the point $Y(\tau)$ holding $\overrightarrow{KY(\tau)}=(1-\tau)\overrightarrow{KY'}+\tau\overrightarrow{KY''}=(1-\tau+t\tau)\overrightarrow{KY'}= \left(\frac{1-\tau}{t}+\tau\right)\overrightarrow{KY''}$ we have also $\overrightarrow{LY(t)}=(1-\tau)\overrightarrow{LY'}+\tau\overrightarrow{LY''}$, and this implies
$$
\left[\overrightarrow{LY(\tau)},\overrightarrow{KY(\tau)}\right]= \left[(1-\tau)\overrightarrow{LY'}+\tau\overrightarrow{LY''},\overrightarrow{KY(\tau)}\right]=
$$
$$
=(1-\tau)(1-\tau+t\tau)\left[\overrightarrow{LY'},\overrightarrow{KY'}\right]+ \tau\left(\frac{1-\tau}{t}+\tau\right)\left[\overrightarrow{LY'},\overrightarrow{KY''}\right]=0.
$$
Now, if a tangent line of the unit circle $\mathcal{C}$ with center $K$ is uniquely determined  at its point $P_0$, then this tangent and the tangents of the positive homothetic copies  $t\mathcal{C}$ at $t\overrightarrow{KP_0}$ are parallel to each other. This implies that on the half-line $\overrightarrow{KP}$ there is precisely one point $t\overrightarrow{KP_0}$ at which the tangent of $t\mathcal{C}$ goes through the point $L$. This means that when the unit circle is smooth, the inflection curve is starlike with respect to the point $K$. In addition, we also proved that we can associate to a non-smooth point (vertex) of the unit circle a segment on the inflection curve lying on the corresponding half-line $\overrightarrow{KP}$. This immediately shows that for polygonal norms the domain of the inflection curve is not convex, and a counterexample to this fact can be easily seen in the smooth case.

The last statement is an easy consequence of the fact that if $x+y$ is Birkhoff orthogonal to $x-y$ for any distinct unit vectors $x,y \in X$, then $X$ is Euclidean (see \cite{alonso}). Indeed, one just has to consider the Thales circle of the segment connecting $x$ and $-x$.

\end{proof}

By the physical meaning of the acceleration vector, the absolute value of the normal component of this vector is
$$
\dot{\varphi}^2(0)\|\overrightarrow{KP}\|^2\chi(P)=\dot{\varphi}^2(0)\frac{\|\overrightarrow{KP}\|^2}{\|\overrightarrow{PO_P}\|},
$$
where $\chi(P)$ and $\|\overrightarrow{PO_P}\|$ are the curvature and the curvature radius $R_P$ of the roulette at $P$, respectively. Along the path, the direction is always normal. If this normal is oriented from $K$ to $P$, then the magnitude and orientation of the normal component of the acceleration vector may be defined in terms of real numbers, and it will be positive if $PO_P$ is positive, i.e., if it has the same orientation as $KP$. If $PO_P$ has orientation opposite to that of $KP$, it will be negative.

On the other hand, it can also be obtained from the length of the orthogonal projection of $\dot{\varphi}^2(0)\overrightarrow{PL}$ to the path normal line $g(P,K)$. Hence we have
$$
\dot{\varphi}^2(0)\frac{\|\overrightarrow{KP}\|^2}{\|\overrightarrow{PO_P}\|}=\dot{\varphi}^2(0)\left[\frac{1}{\sigma(t_{P})}\Q^2(\overrightarrow{KP})- \frac{1}{\sigma(t_{K})}\Q\left(\frac{v_K}{\dot{\varphi}(0)}\right),(\overrightarrow{KP})^0\right],
$$
with $(\overrightarrow{KP})^0$ as unit vector. Denote the second intersection point of the line $g(K,P)$ with the inflection curve by $I_P$. Then
$$
\overrightarrow{PI_P}=\frac{\|\overrightarrow{KP}\|^2}{\|\overrightarrow{PO_P}\|}(\overrightarrow{KP})^0=\left[\frac{1}{\sigma(t_{P})}\Q^2(\overrightarrow{KP})- \frac{1}{\sigma(t_{K})}\Q\left(\frac{v_K}{\dot{\varphi}(0)}\right),(\overrightarrow{KP})^0\right](\overrightarrow{KP})^0\,,
$$
and so we have the equality
$$
\frac{\|\overrightarrow{KP}\|^2}{\|\overrightarrow{O_PP}\|}=\|\overrightarrow{I_PP}\|.
$$
Hence we get the following geometric form of the first Euler-Savary theorem.
\begin{theorem}\label{thm:Euler-SavaryI}
The instantaneous center $K$ and the curvature center $O_P$ of the roulette at its point $P\ne K$ satisfy the equality
\begin{equation}\label{firstEulerSavary}
\|\overrightarrow{O_PP}\|=\frac{\|\overrightarrow{KP}\|^2}{\|\overrightarrow{I_PP}\|},
\end{equation}
where the second intersection point of the path normal line at $P$ with the inflection curve is the point $I_P$.
\end{theorem}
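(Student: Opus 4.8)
The plan is to read off \eqref{firstEulerSavary} from the acceleration formula \eqref{acc} by projecting it onto the path-normal line of the roulette at $P$. By Statement \ref{st:instpole} the velocity ${\bf v}_P = \dot{\varphi}(0)\,\Q(\overrightarrow{KP})$ is Birkhoff normal to $\overrightarrow{KP}$, so the normal line of the roulette at $P$ is exactly $g(K,P)$; in particular the Minkowskian curvature center $O_P$ lies on $g(K,P)$, and $R_P=\|\overrightarrow{PO_P}\|$ with Busemann curvature $\chi(P)=1/R_P$. All that is needed, then, is to equate two expressions for the component of ${\bf a}_P$ along $g(K,P)$: a kinematic one involving $\chi(P)$, and an algebraic one coming from \eqref{acc}.

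For the kinematic side I would use that at $s=0$ the roulette is traced with Minkowski speed $\|{\bf v}_P\| = \dot{\varphi}(0)\,\|\Q(\overrightarrow{KP})\| = \dot{\varphi}(0)\,\|\overrightarrow{KP}\|$, since $\Q$ preserves the Minkowski norm. Hence the normal component of ${\bf a}_P$ has magnitude $\dot{\varphi}^2(0)\,\|\overrightarrow{KP}\|^2\,\chi(P) = \dot{\varphi}^2(0)\,\|\overrightarrow{KP}\|^2/\|\overrightarrow{PO_P}\|$; here the definition \eqref{def:curvsm} of the Busemann curvature and its comparison \eqref{comp:curv} with the Euclidean curvature guarantee that $1/\chi(P)$ is the genuine radius of curvature of the roulette and that the center of curvature sits along the Birkhoff normal, which is $g(K,P)$.

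For the algebraic side I would take the semi-inner product of \eqref{acc} with the unit vector $(\overrightarrow{KP})^0$. Using linearity of the semi-inner product in its first argument, the term $\ddot{\varphi}(0)\,\Q(\overrightarrow{KP})$ contributes nothing, because $\Q(\overrightarrow{KP})$ is Birkhoff normal to $\overrightarrow{KP}$, i.e. $[\Q(\overrightarrow{KP}),(\overrightarrow{KP})^0]=0$; what survives is $\dot{\varphi}^2(0)\,[\overrightarrow{PL},(\overrightarrow{KP})^0]$, where $\overrightarrow{PL} = \frac{1}{\sigma(t_P)}\Q^2(\overrightarrow{KP}) - \frac{1}{\sigma(t_K)}\Q\!\left({\bf v}_K/\dot{\varphi}(0)\right)$ by the definition of $L$. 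Now let $I_P$ be the second intersection of $g(K,P)$ with the inflection curve $\iota$. Writing $\overrightarrow{PL} = \overrightarrow{PI_P} + \overrightarrow{I_PL}$ and using again the linearity in the first slot, the summand $[\overrightarrow{I_PL},(\overrightarrow{KP})^0]$ vanishes since, by Definition \ref{def:inflectioncurve}, $\overrightarrow{LI_P}$ is Birkhoff normal to $\overrightarrow{KI_P}$, which is collinear with $(\overrightarrow{KP})^0$; thus $[\overrightarrow{PL},(\overrightarrow{KP})^0] = [\overrightarrow{PI_P},(\overrightarrow{KP})^0] = \pm\|\overrightarrow{PI_P}\|$, because $\overrightarrow{PI_P}$ is a scalar multiple of $(\overrightarrow{KP})^0$. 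Equating the two computed components and cancelling $\dot{\varphi}^2(0)$ gives $\|\overrightarrow{KP}\|^2/\|\overrightarrow{O_PP}\| = \|\overrightarrow{I_PP}\|$, which is \eqref{firstEulerSavary}.

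The hard part is the kinematic step: one must be certain that the component of the acceleration extracted by the semi-inner product against $(\overrightarrow{KP})^0$ is precisely the quantity that equals $\|{\bf v}_P\|^2\,\chi(P)$, i.e. that for a Minkowskian curve the curvature radius is the reciprocal of the Busemann curvature and the center of curvature lies on the Birkhoff normal — this rests on Statement \ref{st:instpole} together with \eqref{def:curvsm} and \eqref{comp:curv}. The remaining care is purely in signs and orientations: orienting the path normal from $K$ toward $P$, as in the discussion preceding the theorem, turns the $\pm$ into an equality of lengths and yields the unsigned form \eqref{firstEulerSavary}.
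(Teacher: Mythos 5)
Your argument is correct and is essentially the paper's own derivation, which appears as the discussion immediately preceding the theorem: both equate the kinematic expression $\dot{\varphi}^2(0)\|\overrightarrow{KP}\|^2\chi(P)$ for the normal component of ${\bf a}_P$ with the projection $\dot{\varphi}^2(0)[\overrightarrow{PL},(\overrightarrow{KP})^0]$ read off from \eqref{acc}, and identify the latter with $\dot\varphi^2(0)\|\overrightarrow{PI_P}\|$ via the defining property of the inflection curve. Your only addition is to make explicit the decomposition $\overrightarrow{PL}=\overrightarrow{PI_P}+\overrightarrow{I_PL}$ and the vanishing of $[\overrightarrow{I_PL},(\overrightarrow{KP})^0]$, which the paper leaves implicit.
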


By the law of sine introduced earlier, $O_PP$ and $I_PP$ are always marked off in the same orientation along the line $KP$. Thus, when $I_P$ has been established, the orientation of $I_PP$ gives the orientation of $O_PP$. Hence  equality (\ref{firstEulerSavary}) has an equivalent form for directed segments (with Minkowski lengths):
\begin{equation}\label{standfES}
\frac{1}{KP}-\frac{1}{KO_P}=\frac{1}{KI_P}.
\end{equation}

From this inequality we can see immediately that the curvature radius of the point of the inflection curve is infinite. Similarly, the centers of path
curvature of all points at infinity are on the \emph{return curve} obtained as the image of the inflection curve under reflection at the point $K$. To see a connection between the two Euler-Savary equations, we give a connection between $KI_P$ and $\alpha_K$ which is the length of the common velocity vector of the fixed and moving polodes at $K$. For the sine function $\mathrm{sm}(g_1,g_2)$ of Busemann the theorem of sines holds, and it is compatible with the normality concept of Birkhoff. Hence we have
$$
\frac{\|\overrightarrow{KI_P}\|}{\|\overrightarrow{KL}\|}=\frac{\mathrm{sm}(g(K,L),g(L,I_P))}{\mathrm{sm}(g(K,I_P),g(L,I_P))}=
\frac{\sin(g(K,L),g(L,I_P))\frac{\sigma(T_K)}{\sigma(g(K,L))\sigma(g(I_P,L))}} {\sin(g(K,I_P),g(L,I_P))\frac{\sigma(T_K)}{\sigma(g(K,I_P))\sigma(g(I_P,L))}}=\sin\Psi \frac{\sigma(g(K,P))}{\sigma(g(K,L))},
$$
where $\Psi$ is the Euclidean angle between the tangent line $t_K$ at $K$ and the line $g(K,P)$. From this we get the common form of the first and second Euler-Savary equations. By
$$
\left(\frac{1}{KP}-\frac{1}{KO_P}\right)\mathrm{sm}(g(K,P),t_K)\frac{\sigma(t_K)\sigma(g(K,P))}{\sigma(T_K)}= \left(\frac{1}{KP}-\frac{1}{KO_P}\right)\sin\Psi=\frac{\sigma(g(K,L))}{\sigma(g(K,P))}\frac{1}{KL}\,,
$$
and using that the velocity vector ${\bf v}_K$ of the instantaneous pole at $K$ is equal to $V_K=\dot{s}(0)\left.\frac{\partial \gamma (s(\omega))}{\partial s}\right|_0=\alpha_K {\bf v}_K^0$, we get that the acceleration vector is ${\bf a}_K=\ddot{s}(0){\bf v}_K^0+\alpha_K {\bf n}_K^0$. This implies that its normal component is
$[{\bf n}_K^0,{\bf a}_K]{\bf n}_K^0=\alpha_K {\bf n}_K^0$. On the other hand, from the definition of the point $L$ and the continuity property of the examined curves we get that if $P$ tends to $K$,
then $\overrightarrow{LP}$ tends to
$$
\overrightarrow{LK}=\frac{1}{\sigma(t_{K})}\Q\left(\frac{{\bf v}_K}{\dot{\varphi}(0)}\right).
$$
So we have $\|\overrightarrow{LK}\|=\alpha_K/\left(\sigma(t_{K})\dot{\varphi}(0)\right) $, and if we assume that the length of the directed segment $KL$ is positive, then we get
$$
\left(\frac{1}{KP}-\frac{1}{KO_P}\right)\mathrm{sm}(g(K,P),t_K)\frac{\sigma(t_K)\sigma^2(g(K,P))}{\sigma(T_K)\sigma(g(K,L))}=\frac{1}{\|\overrightarrow{KL}\|}= \frac{\sigma(t_{K})\dot{\varphi}(0)}{\alpha_K}=
$$
$$
=\frac{\sigma(t_{K})\dot{\varphi}(0)\sigma^2(t_K)}{\sigma(T_K)}\left(\chi_{\gamma}-\chi_{\gamma'}\right)\,.
$$
This yields the \emph{combined formula of the two Euler-Savary equations}, namely
\begin{equation}\label{combES}
\left(\frac{1}{KP}-\frac{1}{KO_P}\right)\mathrm{sm}(g(K,P),t_K)\frac{\sigma^2(g(K,P))}{\sigma^2(t_K)\sigma(g(K,L))}= \dot{\varphi}(0)\left(\chi_{\gamma}-\chi_{\gamma'}\right)=\frac{\dot{\varphi}(0)}{\sigma^2(t_K)}\,\frac{1}{\alpha_K},
\end{equation}
where we assume that $\sigma(T_K)=\mathrm{area}B=1$.


\begin{thebibliography}{99}
\bibitem{alonso} Alonso, J., Martini, H., Wu, S.: \textit{On Birkhoff orthogonality and isosceles orthogonality in normed linear spaces}, Aequationes Math. \textbf{83} (2012), 153-189.
\bibitem{bliss} Bliss G. A.: A generalization of the notion of angle, {\em Trans. Amer. Math. Soc.} \textbf{7} (1906), 184-196.
\bibitem{brass} Brass P.: Erd\H{o}s distance problems in normed spaces, {\em Comput. Geom.} {\bf 6} (1996), 195-214
\bibitem{busemann} Busemann, H.: Angular measure and integral curvature, {\em Canad. J. Math.} {\bf 1}(1949), 279-296.
\bibitem{busemann 1} Busemann, H.: The geometry of Finsler spaces, {\em Bull. Amer. Math. Soc.} {\bf 56} (1950), 5-16.
\bibitem{busemann 2} Busemann, H.: The foundations of Minkowskian geometry, {\em Commentarii Mathematici Helvetici} {\bf 24} (1950), 156-187.
\bibitem{Dek1} Dekster, B. V.: An angle in Minkowski space, \emph{J. Geom.} \textbf{80} (2004), 31-47.
\bibitem{Dek2} Dekster, B. V.: A metric space of directions in Minkowski space, \emph{J. Geom.} \textbf{80} (2004), 48-64.
\bibitem{Dek3} Dekster, B. V.: Total angle around a point in Minkowski plane, \emph{J. Geom.} \textbf{93} (2009), 38-45.
\bibitem{diminnie} Diminnie, C. R.: Andalafte, E.Z., Freese, R.W.: Generalized angles and a characterization of inner product spaces, {\em Houston J. Math.} {\bf 14} (1988), 475-480.
\bibitem{duvelmeyer} D\"uvelmeyer, N.: Angle measures and bisectors in Minkowski planes, {\em Canad. Math. Bull.} {\bf 48/4} (2005) 523-534.
\bibitem{fankhanel 1} Fankh\"anel, A.: I-measures in Minkowski planes, {\em Beitr. Algebra Geom.} {\bf 50} (2009), 295-299.
\bibitem{fankhanel 2}  Fankh\"anel, A.: On angular measures in Minkowski planes, {\em Beitr. Algebra Geom.} {\bf 52} (2011), 335-342.
\bibitem{finsler} Finsler, P.: {\em \"Uber Kurven und Fl\"achen in allgemeinen R\"aumen}, Dissertation, G\"ottingen, 1918.
\bibitem{finsler 1} Finsler, P.: \"Uber eine Verallgemeinerung des Satzes von Meusnier, {\em Vierteljahrsschr.
Naturf. Ges. Z\"urich} {\bf 85} (1940), 155-164.
\bibitem{graham} Graham R. L., Witsenhausen H. S., Zassenhaus H. J.: On tightest packings in the Minkowski plane, {\em Pacific Journal of Mathematics} {\bf 41/3} (1972), 699-715.
\bibitem{Hil} Hilbert, D.: \emph{Grundlagen der Geometrie}, Teubner-Archiv zur Mathematik, Teubner, Stuttgart, 1999.
\bibitem{gho-langi-spirova} G.Horv\'ath, \'A., L\'angi, Zs., Spirova, M.: Semi-inner products and the concept of semi-polarity, {\em Results in Mathematics} DOI: 10.1007/s00025-015-0510-y (2015)
\bibitem{gho 1} G.Horv\'ath, \'A.: Semi-indefinite inner product and generalized Minkowski spaces, {\em Journal of Geometry and Physics} {\bf 60} (2010), 1190-1208.
\bibitem{gho 2} G.Horv\'ath, \'A.: Premanifolds, {\em Note di Matematica} {\bf 31/2} (2011), 17-51.
\bibitem{garcia} Garcia-Roig, J.-L.: On the group of isometries and a system of functional equations of a real normed plane, {\em Inner Product Spaces and Applications,  Pitman Research Notes in Mathematics Series 376. Longman, Harlow} (1997), pp. 42-53.
\bibitem{gunawan} Gunawan, H., Lindiarni, J., Neswan, O.: $P$-,$I$-,$g$-, and $D$-angles in normed spaces, {\em ITB J. Sci.} {\bf 40/1} (2008), 24-32.
\bibitem{haantjes} Haantjes, J.: Distance geometry. Curvature in abstract metric spaces, {\em Nederl. Akad. Wetensch, Proc.} {\bf 50} (1947), 496-508.
\bibitem{ikawa} Ikawa, T.: Euler-Savary's formula in Minkowski geometry, {\em Balkan Journal of Geometry} {\bf 8}{2} (2003), 31-36.
\bibitem{landsberg} Landsberg, G.: \"Uber die Kr\"ummung in der Variationsrechnung, {\em Math. Ann.} {\bf 65} (1908), 313-349.
\bibitem{Ling} Ling, J. M.: On Dekster's angle measure in Minkowski spaces, \emph{J. Geom.} \textbf{85} (2006), 72-76.
\bibitem{lippmann} Lippmann, H: Zur Winkeltheorie in zweidimensionalen Minkowski- und Finsler-R\"aumen, {\em Indagationes Mathematicae} {\bf 60} (1957), 162--170.
\bibitem{lippmannI} Lippmann, H.: Metrische Eigenschaften verschiedener Winkelma{\ss}e im Minkowski- und Finslerraum, I, {\em Indagationes Mathematicae} {\bf 61} (1958), 223-230.
\bibitem{lippmannII} Lippmann, H.: Metrische Eigenschaften verschiedener Winkelma{\ss}e im Minkowski- und Finslerraum, II, {\em Indagationes Mathematicae} {\bf 61} (1958), 231-238.
\bibitem {lumer} Lumer, G.: Semi-inner product spaces, \emph{Trans. Amer. Math. Soc.} {\bf 100} (1961), 29-43.
\bibitem{Ma-Sp} Martini, H., Spirova, M.: Reflections in strictly convex Minkowski planes, \emph{Aequationes Math.} \textbf{78} (2009), 71-85.
\bibitem {martini-spirova-strambach} Martini, H., Spirova, M., Strambach, K.: Geometric algebra of strictly convex Minkowski planes, {\em  Aequationes Math.} {\bf 88} (2014), 49-66.
\bibitem{martini-swanepoel} Martini, H., Swanepoel, K.: Antinorms and Radon curves, {\em Aequationes Math.} {\bf 71} (2006), 110 - 138.
\bibitem {martini-swanepoel 1} Martini, H., Swanepoel, K. J., Weiss, G.: The geometry of Minkowski spaces - a survey. Part I, \emph{Expositiones
Mathematicae} {\bf 19} (2001), 97-142.
\bibitem {mcshane} McShane, E. J.: Linear functionals on certain Banach spaces, \textit{Proc. Amer. Math. Soc.} {\bf 1} (1950), 402-408.
\bibitem{menger} Menger, K.: Untersuchungen \"uber allgemeine Metrik. Vierte Untersuchung, {\em Math. Ann.} {\bf 103} (1930) 466—501.
\bibitem{milicic} Mili\u{c}ic, P.: On the $B$-angle and $g$-angle in a normed space, {\em Journal of Inequalities in Pure and Applied Math.} {\bf 8/3} (2007), no. 4, Art. 99, 9p.
\bibitem{rattan} Rattan, S. S.: {\em Theory of Machines}, Tata McGraw Hill, New Delhi, (2009).
\bibitem{thompson} Thompson, A. C.: {\em Minkowski Geometry}, Encyclopedia of Mathematics and its Applications, 63. Cambridge University
Press, Cambridge, (1996).
\bibitem{underhill} Underhill, A. L.: Invariants of the function $F(x, y, x', y')$ in the calculus of variations, {\em Trans. Am. Math. Soc.} {\bf 9} (1908), 316-338.


\end{thebibliography}
\end{document}